\documentclass[11pt]{article}
\usepackage[margin=1in]{geometry}
\usepackage[usenames,dvipsnames,svgnames,table]{xcolor} 

\usepackage{longtable}
\usepackage{hhline}
\usepackage{float}
\usepackage{enumitem}
\usepackage{booktabs}

\usepackage{setspace}
\usepackage{xspace}
\usepackage{enumitem} 
\usepackage{rotfloat} 
\usepackage{graphicx}
\usepackage{amssymb, amsmath, amsthm}
\usepackage{verbatim}
\usepackage{natbib}
\usepackage{authblk}
\usepackage{multirow}
\usepackage{subcaption} 
\usepackage{array}
\usepackage{hyperref}
\newcolumntype{L}{>{\centering\arraybackslash}m{0.1\linewidth}}

\newtheorem{theorem}{Theorem}[section]
\newtheorem{lemma}[theorem]{Lemma}
\newtheorem{proposition}[theorem]{Proposition}
\newtheorem{corollary}[theorem]{Corollary}

\newtheorem{definition}{Definition}
\newtheorem{assumption}[theorem]{Assumption}

\newcommand{\bbR}{\mathbb{R}}

\newcommand{\bbH}{\mathbb{H}}

\DeclareMathOperator{\arcosh}{arcosh}
\DeclareMathOperator{\tr}{tr}
\DeclareMathOperator{\vech}{vech}
\DeclareMathOperator{\mat}{mat}
\DeclareMathOperator{\argmax}{argmax}
\DeclareMathOperator{\argmin}{argmin}
\DeclareMathOperator{\KL}{KL}
\DeclareMathOperator{\Cov}{Cov}
\DeclareMathOperator{\diag}{diag}

\newcommand{\KLtext}{Kullback--Leibler\xspace}
\newcommand{\R}{\mathbb{R}}
\newcommand{\E}{\mathbb{E}}
\newcommand{\Prob}{\mathbb{P}}
\newcommand{\Hh}{\mathbb{H}}
\newcommand{\SPD}{\mathcal{S}^{++}}
\newcommand{\Sym}{\mathcal{S}}
\newcommand{\Id}{I}
\newcommand{\Exp}{\operatorname{Exp}}
\newcommand{\Log}{\operatorname{Log}}
\newcommand{\norm}[1]{\left\lVert #1\right\rVert}

\newcommand{\Loro}[2]{\left\langle #1,#2\right\rangle_{\;L}}
\newcommand{\shell}{\mathcal{K}}
\newcommand{\psiwrap}{\varphi}
\newcommand{\score}{\mathsf{s}}
\newcommand{\info}{\mathcal{I}}
\newcommand{\T}{\mathsf{T}}

\newcommand{\HajekLecam}{H\'ajek--Le Cam\xspace}

\begin{document}
	
\title{Profile Likelihood Inference for Anisotropic Hyperbolic Wrapped Normal Models on Hyperbolic Space}
\author[1,2]{Kisung You}
\affil[1]{Department of Mathematics, Baruch College}
\affil[2]{Department of Mathematics, The Graduate Center, City University of New York}
\date{}

\maketitle
\begin{abstract}
We study likelihood-based inference for the anisotropic hyperbolic wrapped normal distribution on standard hyperbolic space. The model has a manifold-valued location parameter and a full positive definite covariance matrix in tangent coordinates. For independent observations from this family, we analyze the profile maximum likelihood estimator obtained by optimizing the likelihood over the location after profiling out the covariance. To guarantee finite-sample existence, we formulate the estimator on a covariance shell that bounds eigenvalues away from zero and infinity. We prove that this constrained likelihood is well posed, that the anisotropic wrapped normal family is identifiable, and that the estimator is strongly consistent when the true covariance lies in the interior of the shell. In global normal coordinates for the location and log-covariance coordinates for the nuisance parameter, we establish joint asymptotic normality and derive efficient profile inference for the location parameter through the Schur-complement information. We further prove local asymptotic normality of the experiment and obtain the \HajekLecam local asymptotic minimax lower bound under squared geodesic loss. The profile estimator attains this bound for truncated squared loss, and for ordinary squared loss under a uniform-integrability condition. We also give an explicit computational form of the estimator based on spectral clipping of the empirical tangent covariance, and present a Monte Carlo calibration study showing that the finite-sample scaled location risk and Wald coverage agree with the asymptotic theory.
\end{abstract}

\section{Introduction}

Hyperbolic geometry has become a standard ambient geometry for data with latent tree-like or hierarchical structure, both in geometric statistics and representation learning \citep{nickel_2017_PoincareEmbeddingsLearning, nickel_2018_LearningContinuousHierarchies}. A central probabilistic construction in that literature is the hyperbolic wrapped normal (HWN) distribution introduced by \citet{nagano_2019_WrappedNormalDistribution}. Their model is defined by sampling a Gaussian vector in the tangent space at the hyperbolic origin, parallel-transporting it to a desired base point, and projecting it onto the manifold. A key virtue of the construction is that the density is available in closed form, because in the Lorentz model the Jacobian of the exponential map is explicitly available.

Despite the importance of the model, inferential theory for its parameters appears to be largely absent from the statistical literature. The gap is especially visible in the anisotropic case, where the covariance parameter is a full positive definite matrix. The anisotropic model is substantially richer than the isotropic one \citep{said_2018_GaussianDistributionsRiemannian, you_2026_FiniteMixtureModeling}, but also more delicate. That is, the location parameter lives on a noncompact manifold, the likelihood includes a curvature-induced Jacobian term, and the unrestricted conditional covariance maximization can fail to have a finite maximizer when the empirical tangent covariance becomes singular.

This paper develops a rigorous likelihood-based theory for the anisotropic hyperbolic wrapped normal family on the standard hyperbolic space $\bbH^d$, where the estimator is developed by the profile likelihood framework \citep{box_1964_AnalysisTransformations, cox_1970_AnalysisBinaryData}. The location parameter is estimated by maximizing the likelihood after profiling out the covariance in tangent coordinates. The correct minimax conclusion for this route is not exact finite-sample minimaxity on the whole manifold. Rather, it is local asymptotic minimaxity in the sense of H\'ajek and Le Cam. This is the natural decision-theoretic endpoint of regular likelihood theory on curved noncompact parameter spaces \citep{hajek_1972_LocalAsymptoticMinimax, vandervaart_1998_AsymptoticStatistics}. Our contributions are as follows.

\begin{enumerate}
\item We formulate a shell-constrained anisotropic likelihood on
\[
\Theta=\Hh^d\times \shell,
\qquad
\shell=\{\Sigma\in\SPD(d):\lambda_-\Id\preceq\Sigma\preceq\lambda_+\Id\},
\]
with $0<\lambda_-<\lambda_+$ fixed and the true covariance in the interior. This resolves the finite-sample non-attainment issue for the unrestricted covariance likelihood while preserving the practical closed form of the covariance profile whenever the empirical tangent covariance lies in the shell.
\item We prove identifiability of the anisotropic HWN family and existence of the shell-constrained joint and profile MLEs.
\item We prove strong consistency by combining a coercivity argument in the location parameter, a compact-uniform law of large numbers, and \KLtext identification in a global normal coordinate chart.
\item In local Euclidean coordinates $(\alpha,\beta)$, where $\alpha=\Log_{\mu_0}(\mu)$ and $\beta=\vech(\log\Sigma)$, we prove joint asymptotic normality of the MLE and efficient asymptotic normality of the profiled location estimator. The efficient covariance for location is the inverse Schur complement of the nuisance block of the Fisher information.
\item We establish local asymptotic normality (LAN) of the anisotropic HWN experiment and deduce the \HajekLecam local asymptotic minimax lower bound. The profile MLE attains the corresponding bound for bounded and truncated squared geodesic losses; the untruncated squared-loss conclusion is obtained under an explicit local uniform-integrability condition.
\item We translate the estimator into a concrete computational recipe. In particular, we derive the closed spectral form of the shell-constrained covariance profile, explain how the outer optimization can be performed in global normal coordinates, and include a Monte Carlo study assessing finite-sample calibration of the asymptotic theory.
\end{enumerate}

\paragraph{Related literature.}
General asymptotic theory for M-estimators and maximum likelihood estimators follows the classical lines of \citet{wald_1949_NoteConsistencyMaximum}, \citet{hoadley_1971_AsymptoticPropertiesMaximum}, and \citet{vandervaart_1998_AsymptoticStatistics}. For local asymptotic minimax theory we rely on \citet{hajek_1972_LocalAsymptoticMinimax} and the modern presentation in \citet{vandervaart_1998_AsymptoticStatistics}. Background on the geometry of Hadamard manifolds and normal coordinates can be found in \citet{petersen_2006_RiemannianGeometry}, and the matrix-logarithm chart for positive definite matrices is standard \citep{bhatia_2009_PositiveDefiniteMatrices}. For optimization on manifolds we will use the general framework of \citet{absil_2008_OptimizationAlgorithmsMatrix}. We emphasize that our problem is different from intrinsic Frechet mean estimation on manifolds, treated for example by \citet{bhattacharya_2003_LargeSampleTheory}. Here the target is a fully parametric anisotropic wrapped family rather than a nonparametric notion of center. We also note that our LAN and local minimax arguments fit into the general asymptotic-statistical paradigm developed by \citet{lecam_2000_AsymptoticsStatistics}, although the concrete verification here is carried out directly for the anisotropic HWN model.

\medskip
The rest of the paper is organized as follows. Section~\ref{sec:model} introduces the anisotropic HWN model, its density, the sample likelihood, and the shell-constrained profile estimator. Section~\ref{sec:prelim} develops the analytic preliminaries needed for likelihood theory, including moment bounds, derivative envelopes, score identities, and a compact-uniform law of large numbers. Section~\ref{sec:existence_consistency} proves existence, identifiability, strong consistency, and asymptotic inactivity of the covariance shell. Section~\ref{sec:normality} establishes joint asymptotic normality and efficient profile inference for the location parameter. Section~\ref{sec:lan} proves local asymptotic normality and derives the local asymptotic minimax consequences under squared geodesic loss. Section~\ref{sec:computation} describes the computational form of the estimator, including the spectral covariance profile and the global coordinate used for location optimization. Section~\ref{sec:numerical} presents a Monte Carlo calibration study. The proofs are collected in the Appendix.


\section{The anisotropic hyperbolic wrapped normal model}
\label{sec:model}

\subsection{Hyperbolic space in the Lorentz model}

Let $d\ge 2$. We write $\Hh^d$ for the $d$-dimensional hyperbolic space in the Lorentz model:
\[
\Hh^d=\{x\in\R^{d+1}:\Loro{x}{x}=-1,\ x_0>0\},
\qquad
\Loro{x}{y}=-x_0y_0+\sum_{j=1}^d x_jy_j .
\]
The induced Riemannian metric on each tangent space $T_x\Hh^d$ is the restriction of the Lorentz bilinear form. The hyperbolic distance is
\[
\rho(x,y)=\arcosh\bigl(-\Loro{x}{y}\bigr),
\qquad x,y\in\Hh^d.
\]
We denote by $o=(1,0,\dots,0)$ the origin. The exponential and logarithm maps are explicit:
\begin{align}
\Exp_\mu(v)
&=\cosh(\norm{v})\,\mu+\sinh(\norm{v})\frac{v}{\norm{v}},
\qquad v\in T_\mu\Hh^d, \label{eq:exp-lorentz}\\
\Log_\mu(x)
&=\frac{\rho(\mu,x)}{\sqrt{(-\Loro{\mu}{x})^2-1}}\bigl(x+\Loro{\mu}{x}\,\mu\bigr),
\qquad x\in\Hh^d,\ x\neq \mu , \label{eq:log-lorentz}
\end{align}
with the convention $\Log_\mu(\mu)=0$. Because $\Hh^d$ is a Hadamard manifold, $\Exp_\mu:T_\mu\Hh^d\to\Hh^d$ is a global diffeomorphism for every $\mu$ \citep{petersen_2006_RiemannianGeometry}. Parallel transport along the unique geodesic from $x$ to $y$ is denoted by $PT_{x\to y}:T_x\Hh^d\to T_y\Hh^d$, whose explicit formulas are given in \citet{nagano_2019_WrappedNormalDistribution}.

We identify $T_o\Hh^d$ with $\R^d$ by $(0,v)\leftrightarrow v$. For each $\mu\in\Hh^d$, define
\[
\T_\mu(x):=PT_{o\to\mu}^{-1}\bigl(\Log_\mu(x)\bigr)\in\R^d,
\qquad
r_\mu(x):=\norm{\T_\mu(x)}=\rho(\mu,x).
\]
The map $x\mapsto \T_\mu(x)$ is a global coordinate map from $\Hh^d$ onto $\R^d$ centered at $\mu$.

\subsection{Definition and density}

\begin{definition}[Anisotropic hyperbolic wrapped normal]
Let $\mu\in\Hh^d$ and $\Sigma\in\SPD(d)$. A random variable $X$ has the anisotropic hyperbolic wrapped normal distribution, denoted by
\[
X\sim \mathrm{HWN}_d(\mu,\Sigma),
\]
if there exists $Z\sim N_d(0,\Sigma)$ such that
\[
X=\Exp_\mu\bigl(PT_{o\to\mu}Z\bigr).
\]
\end{definition}
This is exactly the wrapped construction of \citet{nagano_2019_WrappedNormalDistribution}, specialized here to statistical inference on the parameter pair $(\mu,\Sigma)$.

\begin{proposition}[Density formula]
\label{prop:density}
The distribution $\mathrm{HWN}_d(\mu,\Sigma)$ has density with respect to the hyperbolic Riemannian volume measure
\begin{equation}
\label{eq:density}
p_{\mu,\Sigma}(x)
=(2\pi)^{-d/2}|\Sigma|^{-1/2}
\exp\;\Bigl(-\tfrac12\T_\mu(x)^\top\Sigma^{-1}\T_\mu(x)\Bigr)
\Bigl(\frac{r_\mu(x)}{\sinh r_\mu(x)}\Bigr)^{d-1},
\qquad x\in\Hh^d,
\end{equation}
where the last factor is interpreted as $1$ at $x=\mu$.
\end{proposition}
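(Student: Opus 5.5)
The plan is a change of variables through the global diffeomorphism $\Phi_\mu:\R^d\to\Hh^d$, $\Phi_\mu(z)=\Exp_\mu(PT_{o\to\mu}z)$. Its inverse is exactly $\T_\mu$, and by definition $X=\Phi_\mu(Z)$ with $Z\sim N_d(0,\Sigma)$. For a Borel set $A\subseteq\Hh^d$ we have
\[
\Prob(X\in A)=\int_{\Phi_\mu^{-1}(A)}\varphi_\Sigma(z)\,dz ,
\]
where $\varphi_\Sigma$ is the $N_d(0,\Sigma)$ density. The change-of-variables formula for the diffeomorphism $\T_\mu$ then turns this integral into
\[
\int_A \varphi_\Sigma(\T_\mu(x))\,J_\mu(x)^{-1}\,d\mathrm{vol}(x),
\qquad J_\mu(x)=\bigl|\det d\Phi_\mu(\T_\mu(x))\bigr| ,
\]
where the determinant is taken between orthonormal frames (the Riemannian Jacobian). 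Substituting $z=\T_\mu(x)$ gives $\varphi_\Sigma(z)=(2\pi)^{-d/2}|\Sigma|^{-1/2}\exp(-\tfrac12 z^\top\Sigma^{-1}z)$, which produces every factor of \eqref{eq:density} except the curvature term. It therefore remains to show $J_\mu(x)=(\sinh r/r)^{d-1}$ with $r=\norm{z}=\rho(\mu,x)$.

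To compute $J_\mu$, factor $\Phi_\mu=\Exp_\mu\circ PT_{o\to\mu}$. Parallel transport is a linear isometry $\R^d\cong T_o\Hh^d\to T_\mu\Hh^d$, so its Jacobian is $1$ and it also preserves norms, which gives $\norm{PT_{o\to\mu}z}=\norm{z}$. This reduces the problem to the Jacobian of $\Exp_\mu$ at $v\in T_\mu\Hh^d$ with $\norm{v}=r$. I would compute it directly from \eqref{eq:exp-lorentz}, splitting $T_\mu\Hh^d$ into the radial direction $v/\norm{v}$ and its orthogonal complement.

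\emph{Radial direction.} Differentiating $t\mapsto\Exp_\mu(tv/r)$ gives a unit-speed geodesic, so the radial stretch factor is $1$.

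\emph{Orthogonal directions.} For $w\perp v$ with $\norm{w}=1$, perturbing $v$ by $\epsilon w$ leaves $\norm{v}$ unchanged to first order. Hence
\[
d\Exp_\mu(v)[w]=\frac{\sinh r}{r}\,w .
\]
In the Lorentz model, $w$ is orthogonal to both $\mu$ and $v$, so this vector has Lorentz norm $\sinh r/r$. It is also orthogonal to the image of the radial direction and to the other transverse images, since these are built from $\mu$, $v$ and mutually orthogonal $w$'s.

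With this orthogonal splitting, the Jacobian matrix in orthonormal frames is $\diag(1,\sinh r/r,\dots,\sinh r/r)$. Its determinant is $(\sinh r/r)^{d-1}$, and inverting gives the factor $(r/\sinh r)^{d-1}$. As $r\to0$ this tends to $1$, which matches the stated convention at $x=\mu$; the point $x=\mu$ is a null set in any case. An equivalent route is to cite the Jacobi-field description: in constant curvature $-1$, Jacobi fields normal to the geodesic with $J(0)=0$ grow like $\sinh t$.

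The main obstacle is this Jacobian step, specifically checking that the image vectors are mutually Lorentz-orthogonal and correctly normalised in $T_x\Hh^d$, so that the determinant reduces to the product of the stretch factors. Everything else is bookkeeping. Measurability of the map and the fact that $\Phi_\mu$ is a diffeomorphism both follow from the Hadamard property stated earlier, and the integral against volume is $1$ automatically because the change of variables is exact.
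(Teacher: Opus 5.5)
Your proposal is correct and follows the paper's proof: write $X$ as the image of $Z$ under the diffeomorphism $\Exp_\mu\circ PT_{o\to\mu}$ with inverse $\T_\mu$, use that parallel transport has Jacobian $1$, and use that the exponential map has Jacobian $(\sinh r/r)^{d-1}$. The only difference is that you derive the exponential-map Jacobian directly from the Lorentz formula, whereas the paper cites Nagano et al.\ for it. Your derivation is sound: the radial image $\sinh r\,\mu+\cosh r\,v/r$ has unit length, each transverse image is $(\sinh r/r)\,w$, and these images are mutually Lorentz-orthogonal.
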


It is convenient to remove the removable singularity in the Jacobian term by introducing the smooth radial function
\begin{equation}
\label{eq:phi-def}
\psiwrap(v):=
\begin{cases}
\log\left(\dfrac{\sinh \norm{v}}{\norm{v}}\right), & v\neq 0,\\[0.75em]
0, & v=0.
\end{cases}
\end{equation}
Because
\[
\log\left(\frac{\sinh r}{r}\right)=\frac{r^2}{6}-\frac{r^4}{180}+O(r^6),
\qquad r\to 0,
\]
the map $\psiwrap$ is $C^\infty$ on all of $\R^d$.

For a single observation $x\in\Hh^d$, dropping the additive constant $-(d/2)\log(2\pi)$, we write the log-density as
\begin{equation}
\label{eq:single-loglik}
m_{\mu,\Sigma}(x)
=
-\frac12\log\det\Sigma
-\frac12\T_\mu(x)^\top\Sigma^{-1}\T_\mu(x)
-(d-1)\psiwrap\bigl(\T_\mu(x)\bigr).
\end{equation}

\subsection{Likelihood, covariance profiling, and the shell-constrained estimator}

Fix observations $X_1,\dots,X_n\in\Hh^d$. The sample log-likelihood is
\begin{equation}
\label{eq:sample-loglik}
\ell_n(\mu,\Sigma)=\sum_{i=1}^n m_{\mu,\Sigma}(X_i).
\end{equation}
For a fixed $\mu$, define the empirical tangent covariance
\begin{equation}
\label{eq:Sn}
S_n(\mu):=\frac1n\sum_{i=1}^n \T_\mu(X_i)\T_\mu(X_i)^\top.
\end{equation}
Then \eqref{eq:sample-loglik} can be rewritten as
\begin{equation}
\label{eq:sample-loglik2}
\ell_n(\mu,\Sigma)
=
-\frac n2\log\det\Sigma
-\frac n2\tr\bigl(S_n(\mu)\Sigma^{-1}\bigr)
-(d-1)\sum_{i=1}^n \psiwrap\bigl(\T_\mu(X_i)\bigr).
\end{equation}
If $S_n(\mu)$ is positive definite and the covariance parameter is unrestricted over $\SPD(d)$, the conditional maximizer is $\Sigma=S_n(\mu)$, exactly as in the Euclidean Gaussian model. However, if $S_n(\mu)$ is singular, no finite unrestricted maximizer exists and the likelihood can be increased indefinitely by shrinking the covariance in directions orthogonal to the sample span. To avoid this pathology, fix constants $0<\lambda_-<\lambda_+$ and define the compact shell
\begin{equation}
\label{eq:shell}
\shell:=\{\Sigma\in\SPD(d):\lambda_-\Id\preceq\Sigma\preceq\lambda_+\Id\}.
\end{equation}
We assume throughout that the true covariance $\Sigma_0$ lies in $\mathrm{int}(\shell)$.

For each $\mu$, define the shell-constrained covariance profile
\begin{equation}
\label{eq:sigma-profile}
\widetilde\Sigma_n(\mu)\in
\argmin_{\Sigma\in\shell}
\Bigl\{\log\det\Sigma+\tr\bigl(S_n(\mu)\Sigma^{-1}\bigr)\Bigr\}.
\end{equation}
Then any shell-constrained profile MLE is a measurable maximizer of
\begin{equation}
\label{eq:mu-profile-est}
\hat\mu_n\in \argmax_{\mu\in\Hh^d}\ell_n\bigl(\mu,\widetilde\Sigma_n(\mu)\bigr),
\qquad
\hat\Sigma_n:=\widetilde\Sigma_n(\hat\mu_n).
\end{equation}
The next proposition constructs the basic covariance-profile algebra.

\begin{proposition}[Conditional covariance optimization]
\label{prop:conditionalSigma}
For every symmetric positive semidefinite matrix $S$, the map
\[
\Sigma\mapsto f_S(\Sigma):=\log\det\Sigma+\tr(S\Sigma^{-1})
\]
is continuous on $\shell$ and hence attains its minimum on $\shell$. If $S\in\SPD(d)$, then the unique unconstrained minimizer on $\SPD(d)$ is $S$. Consequently, if $S\in \mathrm{int}(\shell)$, then the unique constrained minimizer on $\shell$ is also $S$.
\end{proposition}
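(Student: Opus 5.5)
The proof has three parts, one for each assertion of Proposition~\ref{prop:conditionalSigma}.

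\emph{Existence on the shell.} On $\shell$ every $\Sigma$ has eigenvalues in $[\lambda_-,\lambda_+]$, so $\det\Sigma\ge\lambda_-^d>0$. Hence $\log\det$ is continuous there, and matrix inversion is continuous on $\SPD(d)$. It follows that $f_S$ is continuous on $\shell$ for any symmetric positive semidefinite $S$; singular $S$ causes no difficulty. The shell is closed, since it is cut out by the closed conditions $v^\top\Sigma v\ge\lambda_-\norm{v}^2$ and $v^\top\Sigma v\le\lambda_+\norm{v}^2$ for all $v$. It is also bounded, because $\norm{\Sigma}_F\le\sqrt d\,\lambda_+$. So $\shell$ is compact, and the Weierstrass theorem gives a minimizer.

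\emph{Unconstrained minimizer for $S\in\SPD(d)$.} I would use the standard Gaussian argument. Write $S^{1/2}$ for the symmetric square root and put $A:=S^{1/2}\Sigma^{-1}S^{1/2}\in\SPD(d)$, with eigenvalues $a_1,\dots,a_d>0$. Then
\[
f_S(\Sigma)-f_S(S)=\tr(A)-\log\det A-d=\sum_{j=1}^d\bigl(a_j-\log a_j-1\bigr).
\]
The scalar inequality $a-\log a-1\ge 0$ holds for $a>0$, with equality iff $a=1$. Hence $f_S(\Sigma)\ge f_S(S)$, and equality holds iff all $a_j=1$. That means $A=\Id$, i.e.\ $\Sigma=S$. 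This gives both existence and uniqueness of the unconstrained minimizer without needing any convexity argument. The only care required is the identity $\log\det\Sigma-\log\det S=-\log\det A$ together with $\tr(S\Sigma^{-1})=\tr A$ (by cyclicity of the trace).

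\emph{Constrained case.} If $S\in\mathrm{int}(\shell)\subset\shell$, then $S$ is feasible. Since $S$ minimizes $f_S$ over all of $\SPD(d)\supseteq\shell$, it also minimizes over $\shell$. Any other constrained minimizer $\Sigma\in\shell$ would satisfy $f_S(\Sigma)=f_S(S)$ and would therefore be a global unconstrained minimizer, so $\Sigma=S$ by the uniqueness just shown. In fact, membership $S\in\shell$ already suffices; the interior assumption is not needed for this step.

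No step here is a real obstacle. The one point deserving care is the strictness of the scalar inequality, which is what delivers uniqueness.
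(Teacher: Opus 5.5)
Your proof is correct and follows the paper's argument essentially verbatim: continuity plus compactness for existence on $\shell$, the substitution $A=S^{1/2}\Sigma^{-1}S^{1/2}$ with the strict scalar inequality $u-\log u-1\ge 0$ (equality only at $u=1$) for uniqueness of the unconstrained minimizer, and feasibility plus that uniqueness for the constrained case. Your side remark is also correct: $S\in\shell$ (which already forces $S\in\SPD(d)$) suffices for the last step, so the interior assumption is not needed there.
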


In the practically important event $S_n(\mu)\in \mathrm{int}(\shell)$, the profile objective reduces to
\begin{equation}
\label{eq:practical-profile-criterion}
\mu\mapsto
-\frac n2\log\det S_n(\mu)
-(d-1)\sum_{i=1}^n \psiwrap\bigl(\T_\mu(X_i)\bigr),
\end{equation}
up to the additive constant $-nd/2$. This is the objective one would optimize in practice when the empirical tangent covariance is positive definite.

\section{Analytic preliminaries}
\label{sec:prelim}

The proofs throughout this paper use a global Euclidean chart centered at the true location $\mu_0$ and a Euclidean chart for $\SPD(d)$ based on the matrix logarithm. The location chart is
\begin{equation}
\label{eq:alpha-chart}
\alpha=\Log_{\mu_0}(\mu)\in T_{\mu_0}\Hh^d\cong \R^d,
\qquad
\mu(\alpha)=\Exp_{\mu_0}(\alpha),
\end{equation}
which is global because $\Hh^d$ is Hadamard \citep{lee_1997_RiemannianManifoldsIntroduction}. For the covariance, let $m=d(d+1)/2$ and define
\begin{equation}
\label{eq:beta-chart}
\beta=\vech(\log\Sigma)\in\R^m,
\qquad
\Sigma(\beta)=\exp(\mat(\beta)).
\end{equation}
The matrix logarithm is a smooth global diffeomorphism from $\SPD(d)$ onto the space of symmetric matrices \citep{bhatia_2009_PositiveDefiniteMatrices}. We write
\[
\vartheta=(\alpha,\beta)\in\R^p,
\qquad
p=d+m,
\]
with true value $\vartheta_0=(0,\beta_0)$.

\subsection{Tail moments under the true model}

We first record basic moment and tail properties of the anisotropic HWN distribution. These bounds ensure the integrability conditions needed for the likelihood-based analysis, which are established by the following lemma.

\begin{lemma}[Moment and exponential tail bounds]\label{lem:moments}
Let $X\sim \mathrm{HWN}_d(\mu_0,\Sigma_0)$. Then:
\begin{enumerate}[label=(\alph*)]
\item for every $q\ge 1$, $\E[\rho(o,X)^q]<\infty$;
\item for every $a>0$, $\E[\exp\{a\rho(o,X)\}]<\infty$.
\end{enumerate}
Moreover, these bounds are uniform over parameter sets of the form $K_\mu\times K_\Sigma$, where $K_\mu\subset\Hh^d$ is compact and $K_\Sigma\subset\SPD(d)$ has eigenvalues bounded above by a finite constant.
\end{lemma}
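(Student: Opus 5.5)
The plan is to reduce everything to Gaussian moments through the wrapped representation and the triangle inequality for the hyperbolic distance. By definition, $X=\Exp_{\mu_0}(PT_{o\to\mu_0}Z)$ with $Z\sim N_d(0,\Sigma_0)$. Parallel transport is an isometry and $\Exp_{\mu_0}$ maps a tangent vector $v$ to a point at geodesic distance $\norm{v}$ from $\mu_0$, because geodesics in a Hadamard manifold are minimizing. Hence
\[
\rho(\mu_0,X)=\norm{PT_{o\to\mu_0}Z}=\norm{Z},
\]
which is also the identity $r_\mu(x)=\norm{\T_\mu(x)}$ already recorded. The triangle inequality then gives
\[
\rho(o,X)\le \rho(o,\mu_0)+\norm{Z}.
\]

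For part (a) I will use $(s+t)^q\le 2^{q-1}(s^q+t^q)$ to get
\[
\E[\rho(o,X)^q]\le 2^{q-1}\bigl(\rho(o,\mu_0)^q+\E\norm{Z}^q\bigr).
\]
Writing $Z=\Sigma_0^{1/2}G$ with $G\sim N_d(0,\Id)$, we have $\norm{Z}\le \lambda_{\max}(\Sigma_0)^{1/2}\norm{G}$. Therefore $\E\norm{Z}^q\le \lambda_{\max}(\Sigma_0)^{q/2}\,\E\norm{G}^q$, which is finite because $\norm{G}^2$ is chi-squared with $d$ degrees of freedom.

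For part (b) the same bound gives
\[
\E[e^{a\rho(o,X)}]\le e^{a\rho(o,\mu_0)}\,\E\bigl[e^{a\lambda^{1/2}\norm{G}}\bigr],
\qquad \lambda=\lambda_{\max}(\Sigma_0).
\]
The right-hand side is finite for every $a>0$: Young's inequality gives $e^{c\norm{G}}\le e^{c^2/(2\epsilon)}e^{\epsilon\norm{G}^2/2}$, and $\E e^{\epsilon\norm{G}^2/2}=(1-\epsilon)^{-d/2}<\infty$ for $\epsilon<1$. Alternatively, one can use Gaussian concentration of the Lipschitz function $\norm{\cdot}$.

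For uniformity, both bounds depend on $(\mu_0,\Sigma_0)$ only through $\rho(o,\mu_0)$ and $\lambda_{\max}(\Sigma_0)$, and are monotone in each. On $K_\mu\times K_\Sigma$, continuity of $\rho(o,\cdot)$ and compactness give $\sup_{K_\mu}\rho(o,\mu)<\infty$, and $\lambda_{\max}$ is bounded on $K_\Sigma$ by hypothesis. Taking suprema yields a single finite constant for each $q$ and each $a$. No lower eigenvalue bound is needed, since only the law of $Z$ enters the estimates and not the density.

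I expect no real obstacle. The only step needing care is justifying $\rho(\mu_0,\Exp_{\mu_0}v)=\norm{v}$ globally. This follows from the explicit formula \eqref{eq:exp-lorentz}: the vector $v$ satisfies $\Loro{\mu_0}{v}=0$ and $\Loro{v}{v}=\norm{v}^2$, so $-\Loro{\mu_0}{\Exp_{\mu_0}v}=\cosh\norm{v}$, and applying $\arcosh$ gives $\norm{v}$. This is a one-line computation that avoids any appeal to cut loci.
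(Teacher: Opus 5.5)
Your proposal is correct and follows essentially the same route as the paper: the wrapped representation gives $\rho(\mu_0,X)=\norm{Z}$, the triangle inequality reduces both parts to Gaussian moments of $\norm{Z}$, part (b) uses the same quadratic-domination trick ($ar\le tr^2+a^2/(4t)$ in the paper, your Young's inequality after whitening), and uniformity comes from taking suprema of $\rho(o,\mu)$ over $K_\mu$ and of $\lambda_{\max}(\Sigma)$ over $K_\Sigma$. Your whitening through $G$ and your explicit Lorentz-model check of $\rho(\mu_0,\Exp_{\mu_0}v)=\norm{v}$, which the paper simply asserts, only make the same argument more explicit.
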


\subsection{Smoothness and derivative envelopes}

The next result is the technical regularity statement behind all asymptotic arguments. It shows that the anisotropic HWN log-likelihood is a smooth finite-dimensional criterion whose derivatives are dominated on compact parameter sets by integrable envelopes under the true model.

\begin{proposition}[Smoothness and derivative envelopes]
\label{prop:envelope}
Let $K_\mu\subset\Hh^d$ be compact and let $B\subset\R^m$ be compact. For $j=0,1,2,3$ there exist constants $C_j,q_j<\infty$ such that
\begin{equation}
\label{eq:envelope}
\sup_{\mu\in K_\mu,\ \beta\in B}
\bigl\|\partial_{(\alpha,\beta)}^j m_{\mu,\Sigma(\beta)}(x)\bigr\|
\le
C_j\bigl(1+\rho(o,x)^{q_j}\bigr),
\qquad x\in\Hh^d,
\end{equation}
where derivatives are taken in the Euclidean chart $(\alpha,\beta)$ with $\mu=\mu(\alpha)$.
In particular, for every compact $K\subset\R^p$,
\[
\sup_{\vartheta\in K}\bigl\|\partial_\vartheta^j m_\vartheta(X)\bigr\|
\]
has finite expectation under $P_{\vartheta_0}$ for $j=0,1,2,3$, and finite $(2+\delta)$th moment for $j=1$ for every $\delta>0$.
\end{proposition}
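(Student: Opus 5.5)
We decompose the single-observation log-density \eqref{eq:single-loglik} in the chart $(\alpha,\beta)$ as a composition of smooth maps,
\[
m_{\vartheta}(x)=-\tfrac12\tr(\mat(\beta))-\tfrac12\,u^\top e^{-\mat(\beta)}u-(d-1)\psiwrap(u),
\qquad u=u(\alpha,x):=\T_{\mu(\alpha)}(x),
\]
using $\log\det\Sigma(\beta)=\tr\mat(\beta)$ and $\Sigma(\beta)^{-1}=\exp(-\mat(\beta))$. The first term depends only on $\beta$, so its derivatives are bounded on compact $B$. The factor $\beta\mapsto e^{-\mat(\beta)}$ is entire, with all derivatives of order at most $3$ bounded on $B$. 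Hence everything reduces to two polynomial bounds.

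\textbf{(i) Bounds on the coordinate map.} We show that $u(\alpha,x)$ and its $\alpha$-derivatives up to order $3$ are bounded by $C(1+\rho(o,x)^{q})$, uniformly over $\alpha$ in the compact preimage of $K_\mu$. We would prove this through the Lorentz-model formulas. Writing $\mu=\mu(\alpha)$ and $PT_{o\to\mu}$ explicitly (Nagano et al.), we have $\T_\mu(x)=PT_{\mu\to o}\Log_\mu(x)$. The formula \eqref{eq:log-lorentz} gives $\Log_\mu(x)=g(c)\,(x-c\mu)$ with $c=-\Loro{\mu}{x}=\cosh\rho(\mu,x)$ and $g(c)=\arcosh(c)/\sqrt{c^2-1}$. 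The function $g$ is real-analytic on $[1,\infty)$: it extends smoothly through $c=1$ as a function of $c$, since $\arcosh c/\sqrt{c^2-1}$ is analytic in $c-1$. Moreover $g$ and its derivatives satisfy $|g^{(k)}(c)|\le C_k\log(2+c)/c^{1+k}$, with $g^{(k)}$ bounded near $c=1$.

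The remaining ingredients are as follows.
\begin{itemize}
\item $\mu(\alpha)$ and the transport matrix $PT_{\mu(\alpha)\to o}$ are smooth in $\alpha$ with bounded derivatives on compacts.
\item $x$ enters linearly, and $\norm{x}_{\R^{d+1}}\le 2\cosh\rho(o,x)$.
\item $c$ is linear in $x$.
\end{itemize}
A naive bound therefore gives growth like $e^{\rho(o,x)}$, not polynomial. The key is the cancellation: $\norm{u}=\rho(\mu,x)\le\rho(o,x)+\rho(o,\mu)$ is linear. For derivatives, we would avoid the Lorentz formula. Instead we use Hadamard-manifold Jacobi field estimates: the derivative of $\mu\mapsto\Log_\mu(x)$ is controlled by Jacobi fields along the geodesic from $\mu$ to $x$. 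In constant curvature $-1$ these are explicit, namely $\partial_\mu\Log_\mu(x)=-(\text{radial projection})-r\coth r\,(\text{tangential projection})$, which is bounded by $C(1+r)$. Higher derivatives are likewise polynomial in $r$, because $r\coth r$ and its derivatives grow at most linearly. Combined with smoothness of the parallel-transport frame in $\mu$, this gives $\|\partial_\alpha^k u\|\le C_k(1+\rho(o,x))^{k+1}$.

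\textbf{This is the main obstacle.} We would first try to obtain it by exploiting isometry invariance: $\T_\mu(x)=\T_o(\tau_\mu^{-1}x)$ with the Lorentz boost $\tau_\mu$. We would then differentiate $\alpha\mapsto\tau_{\mu(\alpha)}^{-1}x$ and apply the chain rule through $\T_o=\Log_o$, whose derivatives at a point $y$ decay like $r/\sinh r$ in tangential directions. This decay is what cancels the exponential growth of $\partial_\alpha(\tau^{-1}x)$, which is only of size $\cosh r$. A careful version checks that each $k$-fold derivative pairs exponential factors with matching decay.

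\textbf{(ii) Bounds on $\psiwrap$.} The function $\psiwrap(v)=h(\norm{v})$ with $h(r)=\log(\sinh r/r)$ is smooth, and $h'(r)=\coth r-1/r$ is bounded together with all higher derivatives on $[0,\infty)$. Consequently $\psiwrap$ has globally bounded derivatives of orders $1$ to $3$, and $|\psiwrap(v)|\le\norm{v}$.

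\textbf{(iii) Conclusion.} Faà di Bruno's formula applied to the quadratic form and to $\psiwrap\circ u$ yields \eqref{eq:envelope}, with $q_j$ of order at most $2(j+1)$. The expectation and $(2+\delta)$-moment claims then follow from Lemma~\ref{lem:moments}(a), since every polynomial moment of $\rho(o,X)$ is finite under $P_{\vartheta_0}$. For the second assertion, a compact $K\subset\R^p$ projects to compact sets in $\alpha$ and in $\beta$, and the $\alpha$-projection maps to a compact $K_\mu$ under $\Exp_{\mu_0}$.
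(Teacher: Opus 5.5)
Your proposal is correct. It shares the paper's skeleton: polynomial bounds on $\alpha$-derivatives of $\T_\mu(x)$, separate control of $\psiwrap$, compactness in the $\beta$-chart, then Lemma~\ref{lem:moments}. The key step, however, is done differently.

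\emph{The paper's route.} It differentiates $\Log_\mu(x)=q(r)\bigl(x-\cosh(r)\mu\bigr)$, with $q(r)=r/\sinh r$, directly in $\mu$. It splits into $\rho(\mu,x)\le 1$ (handled by compactness) and $\rho(\mu,x)\ge 1$, where it uses bounded first--third derivatives of the distance function on $\{r\ge 1\}$. It then asserts that every Leibniz term pairs one factor $q^{(k)}(r)=O\bigl((1+r)e^{-r}\bigr)$ with one factor of size $O(e^{r})$, coming from $x$ or from derivatives of $\cosh r$. Parallel transport is carried along as a separate bounded factor.

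\emph{Your boost route.} The reduction $\T_\mu(x)=\Log_o(\tau_\mu^{-1}x)$ is valid because the transport of Nagano et al.\ is exactly the differential of the transvection $\tau_\mu$ along the geodesic from $o$ to $\mu$. It makes the paper's bookkeeping automatic. All $\mu$-dependence sits in a $(d+1)\times(d+1)$ matrix with bounded $\alpha$-derivatives on compacts, so $\|\partial_\alpha^j(\tau_{\mu(\alpha)}^{-1}x)\|\lesssim\|x\|\lesssim e^{r}$, using $\rho(o,x)\le\rho(o,\mu)+r$. All nonlinearity sits in the fixed map $G(y)=g(y_0)\bar y$, for which your bound on $g^{(k)}$ gives $\|D^kG(y)\|\lesssim(1+r)e^{-kr}$ on the hyperboloid. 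Each Fa\`a di Bruno term $D^kG[\partial^{j_1}y,\dots,\partial^{j_k}y]$ is then $O(1+r)$. This is precisely the ``careful version'' you defer, and it gives linear growth in $\rho(o,x)$ at every order, sharper than your $(1+\rho)^{k+1}$.

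\emph{The Jacobi-field alternative.} This also works, but it needs the same $r\ge 1$ split as the paper. Differentiating the radial/tangential projections produces $\nabla dr=\coth r\,(g-dr\otimes dr)$, which is singular at $r=0$.

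\emph{Other simplifications.} The identity $\log\det\Sigma(\beta)=\tr\mat(\beta)$ is correct. So is the claim that $\psiwrap$ has globally bounded derivatives of orders $1$ to $3$, which is sharper than the paper's $C(1+\norm{v})$. Together these streamline the paper's Steps 3--4.

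I would commit to the boost argument rather than presenting two hedged alternatives.
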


Proposition~\ref{prop:envelope} is the reason a local Euclidean likelihood analysis works. Once the model is written in the global chart $(\alpha,\beta)$, all standard maximum-likelihood arguments become available. In particular, compact-uniform laws of large numbers, differentiation under the integral sign, score central limit theorems, and third-order Taylor expansions can be readily justified by domination.

\subsection{Score, information, and a compact-uniform law of large numbers}

For $\vartheta=(\alpha,\beta)\in\R^p$, write $m_\vartheta(x)$ for the log-density \eqref{eq:single-loglik} expressed in the chart $(\alpha,\beta)$. Define the score and information matrix by
\[
\score_\vartheta(x):=\partial_\vartheta m_\vartheta(x),
\qquad
\info(\vartheta):=\E_\vartheta[\score_\vartheta(X)\score_\vartheta(X)^\top].
\]
We next record basic properties of the score function and a compact-uniform law of large numbers. The former ensures that the score has mean zero and identifies the Fisher information, while the latter provides uniform convergence of the empirical criterion and its derivatives on compact parameter sets.

\begin{lemma}[Score mean zero and information identity]
\label{lem:scoremean}
For every $\vartheta$,
\[
\E_\vartheta[\score_\vartheta(X)]=0,
\qquad
\info(\vartheta)= -\,\E_\vartheta[\partial_\vartheta^2 m_\vartheta(X)].
\]
\end{lemma}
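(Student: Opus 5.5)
The plan is the classical route: differentiate the normalization identity under the integral sign twice, with Proposition~\ref{prop:envelope} supplying the domination. We work in the chart $\vartheta=(\alpha,\beta)$ and write $p_\vartheta=\exp(m_\vartheta-\tfrac d2\log(2\pi))$ for the density with respect to the Riemannian volume $\mathrm{vol}$ on $\Hh^d$. By Proposition~\ref{prop:density} we have
\[
\int_{\Hh^d}p_\vartheta\,d\mathrm{vol}=1\quad\text{for every }\vartheta .
\]
The dominating measure does not depend on $\vartheta$, so all parameter dependence sits in the integrand. Since $\partial_\vartheta p_\vartheta=p_\vartheta\,\score_\vartheta$ and $\partial^2_\vartheta p_\vartheta=p_\vartheta\bigl(\partial^2_\vartheta m_\vartheta+\score_\vartheta\score_\vartheta^\top\bigr)$, differentiating the identity once gives $\E_\vartheta[\score_\vartheta]=0$. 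Differentiating it twice gives $\E_\vartheta[\partial^2_\vartheta m_\vartheta]+\E_\vartheta[\score_\vartheta\score_\vartheta^\top]=0$, which is exactly the information identity.

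The only real work is justifying both interchanges of derivative and integral. Fix $\vartheta^*$ and a compact ball $K=K_\mu\times B$ around it in the chart. Proposition~\ref{prop:envelope} bounds $\sup_{K}\|\partial^j m_\vartheta(x)\|$ by $C_j(1+\rho(o,x)^{q_j})$ for $j=1,2$. It remains to find a $\vartheta$-free integrable bound on $\sup_K p_\vartheta(x)$. For this I use \eqref{eq:density} directly. The eigenvalues of $\Sigma(\beta)$ lie in some interval $[c,C]$ for $\beta\in B$, so
\[
p_\vartheta(x)\le c'\exp\bigl(-r_\mu(x)^2/(2C)\bigr).
\]
Here I drop the Jacobian factor, which is at most $1$, and recall $r_\mu(x)=\norm{\T_\mu(x)}$. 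By the triangle inequality $r_\mu(x)\ge\rho(o,x)-R$ with $R=\sup_{K_\mu}\rho(o,\mu)$. Hence
\[
\sup_K p_\vartheta(x)\,\bigl(1+\rho(o,x)^{q}\bigr)^2\le c''(1+\rho^{2q})\,e^{-(\rho-R)_+^2/(2C)},\qquad \rho=\rho(o,x).
\]
Because the volume element in geodesic polar coordinates is $\sinh^{d-1}\rho\,d\rho\,d\omega$, which grows like $e^{(d-1)\rho}$, this bound is integrable: Gaussian decay in $\rho$ beats exponential growth.

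With these envelopes, a dominated convergence (mean-value) argument lets us differentiate under the integral on a neighbourhood of $\vartheta^*$, both for $\int p_\vartheta$ and for $\int p_\vartheta\score_\vartheta$. The same bounds show that $\score_\vartheta\score_\vartheta^\top$ and $\partial^2 m_\vartheta$ are $P_\vartheta$-integrable, so $\info(\vartheta)$ is finite. The main obstacle is this step, the uniform Gaussian-versus-volume-growth domination, since the expectation is taken under the same $\vartheta$ being differentiated and not just under $\vartheta_0$. Lemma~\ref{lem:moments} gives the required moments uniformly, but the pointwise sup-envelope above is cleaner for Leibniz's rule. Smoothness of $m_\vartheta$ in $\vartheta$ for each fixed $x$, which the interchange also needs, is part of Proposition~\ref{prop:envelope}.
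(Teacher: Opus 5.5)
Your proposal is correct and follows the same route as the paper: differentiate $\int p_\vartheta\,d\mathrm{vol}=1$ twice under the integral sign, and use $\partial_\vartheta^2 p_\vartheta=(\partial_\vartheta^2 m_\vartheta+\score_\vartheta\score_\vartheta^\top)p_\vartheta$. Your bound $\sup_K p_\vartheta(x)\le c'e^{-(\rho(o,x)-R)_+^2/(2C)}$, integrated against the $\sinh^{d-1}\rho$ volume growth, gives the domination with respect to the volume measure that the paper simply attributes to Proposition~\ref{prop:envelope}, whose envelopes are stated only for $m_\vartheta$ and for moments under $P_{\vartheta_0}$; your write-up is, if anything, the more complete of the two.
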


\begin{lemma}[Compact-uniform law of large numbers]
\label{lem:ulln}
Let $K\subset\R^p$ be compact and define
\[
M_n(\vartheta):=\frac1n\sum_{i=1}^n m_\vartheta(X_i),
\qquad
M(\vartheta):=\E_{\vartheta_0}[m_\vartheta(X)].
\]
Then
\[
\sup_{\vartheta\in K}\bigl|M_n(\vartheta)-M(\vartheta)\bigr|\to 0
\qquad\text{a.s.}
\]
Moreover, the same conclusion holds if $m_\vartheta$ is replaced by any first or second partial derivative of $m_\vartheta$.
\end{lemma}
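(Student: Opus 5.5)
The plan is to apply the classical Glivenko--Cantelli theorem for parametric classes with an integrable envelope, in its compact-parameter-set form (as in \citet{vandervaart_1998_AsymptoticStatistics}, Example~19.8, or the uniform strong law of \citet{hoadley_1971_AsymptoticPropertiesMaximum}). Its hypotheses are: (i) $\vartheta\mapsto m_\vartheta(x)$ is continuous on $K$ for every $x$; (ii) $\sup_{\vartheta\in K}|m_\vartheta(x)|\le F(x)$ with $\E_{\vartheta_0}F(X)<\infty$; (iii) $K$ is compact. Under these, the class $\{m_\vartheta:\vartheta\in K\}$ is $P_{\vartheta_0}$-Glivenko--Cantelli, which is exactly the claimed almost sure uniform convergence.

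\textbf{Checking the hypotheses.} Since $K\subset\R^p$ is compact, its projections give a compact set $A\subset\R^d$ of $\alpha$'s and a compact set $B\subset\R^m$ of $\beta$'s. The set $K_\mu=\Exp_{\mu_0}(A)$ is compact in $\Hh^d$ by continuity of $\Exp_{\mu_0}$. Continuity (indeed smoothness) in $\vartheta$ follows from the explicit formulas \eqref{eq:exp-lorentz}--\eqref{eq:log-lorentz}, from the smoothness of parallel transport, of the matrix exponential, and of $\psiwrap$, as recorded in Proposition~\ref{prop:envelope}. The envelope is $F(x)=C_0(1+\rho(o,x)^{q_0})$ from \eqref{eq:envelope} with $j=0$. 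It is integrable under $P_{\vartheta_0}$ by Lemma~\ref{lem:moments}(a) applied at $(\mu_0,\Sigma_0)$. Continuity of $M$ then follows by dominated convergence.

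\textbf{Spelling out the proof of the GC step.} I would include a short bracketing argument. Fix $\varepsilon>0$. For $\vartheta\in K$ and $\delta>0$, define
\[
u_{\vartheta,\delta}(x)=\sup_{\|\vartheta'-\vartheta\|<\delta,\ \vartheta'\in K}m_{\vartheta'}(x),
\]
and define $l_{\vartheta,\delta}$ analogously with the infimum. Both are measurable, since the supremum can be taken over a countable dense subset by continuity. By continuity they converge pointwise to $m_\vartheta$ as $\delta\downarrow0$, and they are dominated by $F$. Dominated convergence therefore gives $\E(u_{\vartheta,\delta}-l_{\vartheta,\delta})<\varepsilon$ for $\delta$ small. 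By compactness, finitely many such balls cover $K$. Applying the strong law of large numbers to the finitely many $u$'s and $l$'s then yields
\[
\limsup_n\sup_{K}|M_n-M|\le\varepsilon \quad\text{a.s.}
\]
Letting $\varepsilon\downarrow0$ along a countable sequence completes the argument.

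\textbf{Extension to derivatives.} For a first or second partial derivative $\partial^j m_\vartheta$ with $j\in\{1,2\}$, I would repeat the argument verbatim, with the same function class replaced by $\{\partial^j m_\vartheta:\vartheta\in K\}$. Continuity in $\vartheta$ holds because $m_\vartheta(x)$ is $C^3$ in $\vartheta$; this follows from the bound on the $j+1$ derivative in Proposition~\ref{prop:envelope}, or simply from smoothness. The envelope is $C_j(1+\rho(o,x)^{q_j})$, which is again integrable by Lemma~\ref{lem:moments}.

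The only mildly delicate point is ensuring the envelope in Proposition~\ref{prop:envelope} is stated for a product compact set containing $K$, which the projection argument above handles. Beyond that, there is no real obstacle.
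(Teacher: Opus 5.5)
Your proof is correct, but it takes a different route from the paper's.

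\textbf{The paper's route.} The paper uses a Lipschitz-in-parameter argument. It sets $L_K(x)=\sup_{\vartheta\in K}\|\partial_\vartheta m_\vartheta(x)\|$, which is integrable by Proposition~\ref{prop:envelope}. The mean-value theorem then gives $|m_{\vartheta_1}(x)-m_{\vartheta_2}(x)|\le L_K(x)\|\vartheta_1-\vartheta_2\|$. With an $\varepsilon$-net of $K$, the strong law at the finitely many net points and for $L_K$ yields $\limsup_n\sup_K|M_n-M|\le 2\varepsilon\,\E L_K(X)$. For the derivative statements, the paper uses the envelope one order higher, so the third-derivative envelope is needed to handle second derivatives.

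\textbf{Your route.} Your Wald-type bracketing argument needs only continuity in $\vartheta$ and the envelope of the same order. Dominated convergence controls $\E(u_{\vartheta,\delta}-l_{\vartheta,\delta})$, and compactness finishes the proof.

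\textbf{What your route buys.} You need one fewer derivative. The $j=2$ envelope suffices for second derivatives, and the same argument would give a uniform law for third derivatives, which the paper explicitly declines to assert. You also avoid a small technicality in the paper's proof. The mean-value inequality along the segment $[\vartheta_1,\vartheta_2]$ requires that segment to lie in the set over which $L_K$ is taken. That means $K$ must be convex, or be replaced by its compact convex hull; the paper does not mention this.

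\textbf{What the paper's route buys.} It gives an explicit quantitative modulus, the bound $2\varepsilon\,\E L_K(X)$. It also avoids the measurability remark about suprema over countable dense subsets, which you rightly had to include.
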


\section{Existence, identifiability, and strong consistency}\label{sec:existence_consistency}

We now establish the basic structural properties of the likelihood framework. In particular, we show that the shell-constrained likelihood admits a maximizer, that the anisotropic HWN family is identifiable, and that the resulting estimator is strongly consistent under standard sampling assumptions. These results provide the foundation for the subsequent asymptotic analysis.

\begin{theorem}[Existence of the shell-constrained MLE]
\label{thm:existence}
For every sample $(X_1,\dots,X_n)\in(\Hh^d)^n$, the shell-constrained joint likelihood
\[
(\mu,\Sigma)\mapsto \ell_n(\mu,\Sigma)
\]
admits at least one maximizer on $\Theta=\Hh^d\times\shell$. Consequently, the shell-constrained profile MLE \eqref{eq:mu-profile-est} exists.
\end{theorem}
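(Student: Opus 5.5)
The plan is a direct Weierstrass-type argument: show that $\ell_n$ is continuous on $\Theta$, that $\shell$ is compact, and that $\ell_n(\mu,\Sigma)\to-\infty$ as $\rho(o,\mu)\to\infty$ uniformly in $\Sigma\in\shell$. A superlevel set of $\ell_n$ is then contained in a compact set $\bar B(o,R)\times\shell$, where the maximum is attained.

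\emph{Continuity.} The map $(\mu,x)\mapsto\T_\mu(x)$ is continuous, since $\Log_\mu(x)$ in \eqref{eq:log-lorentz} is smooth away from the diagonal and continuous across it. The transport $PT_{o\to\mu}$ also depends continuously on $\mu$ by the explicit formulas of \citet{nagano_2019_WrappedNormalDistribution}. Also $\psiwrap$ is $C^\infty$, and $\Sigma\mapsto(\log\det\Sigma,\Sigma^{-1})$ is continuous on $\SPD(d)\supset\shell$. Hence $m_{\mu,\Sigma}(X_i)$, and therefore $\ell_n$, is continuous on $\Theta$.

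\emph{Coercivity.} For $\Sigma\in\shell$ we have $\log\det\Sigma\ge d\log\lambda_-$, so $-\tfrac n2\log\det\Sigma\le -\tfrac{nd}{2}\log\lambda_-$, and the quadratic term is $\le 0$. For the Jacobian term, write $r_i=\norm{\T_\mu(X_i)}=\rho(\mu,X_i)$. Using $\sinh r/r\ge 1$ and $\log(\sinh r/r)\ge r-\log(2r)-c$ for large $r$, together with $\psiwrap\ge 0$ everywhere, we get
\[
\ell_n(\mu,\Sigma)\le -\tfrac{nd}{2}\log\lambda_- -(d-1)\,\psiwrap\bigl(\T_\mu(X_1)\bigr).
\]
Since $d\ge 2$ and $\rho(\mu,X_1)\ge\rho(o,\mu)-\rho(o,X_1)\to\infty$, the right side tends to $-\infty$ as $\rho(o,\mu)\to\infty$, uniformly in $\Sigma$. (The quadratic term alone also gives coercivity: $\T_\mu^\top\Sigma^{-1}\T_\mu\ge r_1^2/\lambda_+$. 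This route works even for $d=1$ and is the one to use if the Jacobian bound seems delicate.)

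\emph{Conclusion.} Pick any $(\mu^*,\Sigma^*)\in\Theta$ and let $c=\ell_n(\mu^*,\Sigma^*)$. By coercivity there is $R$ with $\ell_n<c$ whenever $\rho(o,\mu)>R$. The set $\bar B(o,R)$ is compact, because $\Hh^d$ is complete (Hopf--Rinow), and $\shell$ is closed and bounded in the symmetric matrices, hence compact. So $\ell_n$ attains its maximum over $\bar B(o,R)\times\shell$, and that maximum is global on $\Theta$.

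\emph{Profile MLE.} Given a joint maximizer $(\hat\mu,\hat\Sigma)$, note that
\[
\ell_n\bigl(\mu,\widetilde\Sigma_n(\mu)\bigr)=\max_{\Sigma\in\shell}\ell_n(\mu,\Sigma)=:L(\mu),
\]
where the maximizer exists by Proposition~\ref{prop:conditionalSigma} applied with $S=S_n(\mu)$. Then $\sup_\mu L(\mu)=\sup_\Theta\ell_n=\ell_n(\hat\mu,\hat\Sigma)\le L(\hat\mu)$, so $\hat\mu$ maximizes the profile criterion. The value does not depend on which element of the argmin is chosen.

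The main obstacle is the uniformity of the coercivity bound in $\Sigma$. This is exactly where the lower eigenvalue bound $\lambda_-$ is used: without it, the $-\log\det\Sigma$ term is unbounded, which is the failure mode described after \eqref{eq:sample-loglik2}. Measurability of a selection, if one wants $\hat\mu_n$ measurable, would follow from a measurable selection theorem for continuous criteria on $\sigma$-compact spaces; this is secondary to existence.
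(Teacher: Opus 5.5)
Your proof is correct and has the same skeleton as the paper's: coercivity in $\mu$ uniformly over $\shell$, then compactness of $\bar B(o,R)\times\shell$ and continuity, then the profile step. The difference is which term of the likelihood supplies the coercivity.

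\emph{The paper's route.} It discards the Jacobian term via $\psiwrap\ge 0$ and gets coercivity from the quadratic form. It uses $\Sigma^{-1}\succeq\lambda_+^{-1}\Id$, the triangle inequality, and Jensen over all $n$ observations to obtain $\sup_{\Sigma\in\shell}\ell_n(\mu,\Sigma)\le C_n-\frac{n}{2\lambda_+}\bigl(\rho(o,\mu)-\bar d_n\bigr)_+^2$. This is exactly the route you put in a parenthesis.

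\emph{Your route.} You do the opposite: you discard the quadratic form and let the curvature-induced Jacobian of a single observation, $-(d-1)\psiwrap(\T_\mu(X_1))$, force $\ell_n\to-\infty$. This is valid because $d\ge 2$ and $\psiwrap$ is radial, nonnegative and unbounded. It also shows that coercivity in $\mu$ needs only $\lambda_-$, with $\lambda_+$ entering solely through compactness of $\shell$. That observation is specific to the hyperbolic Jacobian and has no Euclidean analogue.

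\emph{What the paper's version buys.} Its bound is quadratic in $\rho(o,\mu)$, proportional to $n$, and depends on the data only through $\bar d_n$. It is reused in Step 2 of the consistency proof to trap $\hat\vartheta_n$ in a deterministic ball almost surely, via the strong law for $\bar d_n$. Your single-observation bound, divided by $n$, only localizes $\hat\mu_n$ within a radius that grows with $n$. For that later purpose you would need to sum the Jacobian terms over all observations (e.g.\ using $\psiwrap(v)\ge\norm{v}/2-c$) or switch to the quadratic route.

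Your explicit argument that a joint maximizer yields a profile maximizer, which the paper treats as immediate, is also correct.
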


The existence result ensures that the constrained optimization problem is well posed for every sample. We next verify that the underlying statistical model is identifiable, which guarantees that the population likelihood has a unique maximizer.

\begin{theorem}[Identifiability]
\label{thm:identifiability}
The anisotropic HWN family $\{\mathrm{HWN}_d(\mu,\Sigma):(\mu,\Sigma)\in \Hh^d\times \SPD(d)\}$ is identifiable. That is, if
$\mathrm{HWN}_d(\mu,\Sigma)=\mathrm{HWN}_d(\mu',\Sigma')$, then $\mu=\mu'$ and $\Sigma=\Sigma'$.
\end{theorem}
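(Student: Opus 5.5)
The plan is to recover $\mu$ from the density itself, as its unique global maximizer, and then to read off $\Sigma$ from the pushforward of the law to the tangent space at $\mu$. Suppose $P:=\mathrm{HWN}_d(\mu,\Sigma)=\mathrm{HWN}_d(\mu',\Sigma')$.

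By Proposition~\ref{prop:density}, $P$ has a density with respect to the Riemannian volume. Both parametrizations give densities that are continuous and everywhere positive. Since two continuous densities that agree almost everywhere agree everywhere, we get
\[
p_{\mu,\Sigma}(x)=p_{\mu',\Sigma'}(x)\quad\text{for all }x\in\Hh^d .
\]

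\textbf{Step 1 (locating $\mu$).} Write $p_{\mu,\Sigma}(x)=c\,\exp\{-\tfrac12 v^\top\Sigma^{-1}v\}\,(r/\sinh r)^{d-1}$, where $v=\T_\mu(x)$ and $r=\norm{v}$.
\begin{itemize}
\item The Gaussian factor is at most $1$, with equality if and only if $v=0$.
\item The Jacobian factor $(r/\sinh r)^{d-1}$ is at most $1$, with equality if and only if $r=0$, because $d\ge 2$.
\end{itemize}
Hence $p_{\mu,\Sigma}$ attains its supremum $c=(2\pi)^{-d/2}|\Sigma|^{-1/2}$ exactly at the single point $x=\mu$. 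Because the two densities coincide as functions, their unique maximizers coincide, so $\mu=\mu'$. As a by-product, $|\Sigma|=|\Sigma'|$.

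\textbf{Step 2 (recovering $\Sigma$).} With $\mu=\mu'$ fixed, the map $\T_\mu$ is a bijection $\Hh^d\to\R^d$, namely the inverse of $v\mapsto \Exp_\mu(PT_{o\to\mu}v)$. By the definition of the model, the pushforward of $P$ under $\T_\mu$ is $N_d(0,\Sigma)$ when computed from the first parametrization and $N_d(0,\Sigma')$ when computed from the second. Equal laws have equal covariance matrices, so $\Sigma=\Sigma'$.

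A density-only alternative for Step 2: the equality $p_{\mu,\Sigma}=p_{\mu,\Sigma'}$, together with $|\Sigma|=|\Sigma'|$, cancels the common Jacobian factor. This leaves $v^\top\Sigma^{-1}v=v^\top\Sigma'^{-1}v$ for all $v\in\R^d$, and polarization then gives $\Sigma^{-1}=\Sigma'^{-1}$.

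\textbf{Main point to check.} The step needing care is Step 1's use of continuity to pass from almost-everywhere equality to pointwise equality. This holds because $\T_\mu$ is continuous, as $\Log_\mu$ and parallel transport are smooth. It also uses that $d\ge 2$ makes the Jacobian factor strictly decreasing in $r$. Even for $d=1$ the Gaussian factor alone yields a unique maximizer, so the argument is robust.
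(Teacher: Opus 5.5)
Your proposal is correct and follows essentially the same route as the paper's proof. In both, the density is continuous and positive, so equality of the laws forces equality of the densities everywhere, and $\mu$ is identified as the unique mode where the density attains $(2\pi)^{-d/2}|\Sigma|^{-1/2}$; then $\Sigma$ is identified from the law $\T_\mu(X)\sim N_d(0,\Sigma)$, and your polarization variant of Step 2 is a valid but unneeded extra.
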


To study asymptotic properties, we impose standard regularity conditions. The first ensures that the true parameter lies in the interior of the constraint set, so that the shell does not bind asymptotically. The second is the usual nondegeneracy condition on the Fisher information.

\begin{assumption}[Interior truth and nonsingular information]
\label{ass:interior-info}
The true parameter $(\mu_0,\Sigma_0)$ belongs to $\Hh^d\times \mathrm{int}(\shell)$, and the Fisher information matrix $\info(\vartheta_0)$ is positive definite.
\end{assumption}

The interior condition is required only because the shell is a technical device. The shell itself may be chosen arbitrarily large provided it contains the true covariance in its interior. The positive-definiteness of the information matrix is the standard nondegeneracy condition needed for asymptotic normality and LAN.

We now turn to consistency of the estimator. The proof follows a classical likelihood argument based on Kullback--Leibler identification, localization of the maximizer on a compact set, and a compact-uniform law of large numbers.

\begin{theorem}[Strong consistency]
\label{thm:consistency}
Assume that $X_1,X_2,\dots$ are i.i.d.\ from $\mathrm{HWN}_d(\mu_0,\Sigma_0)$ with $(\mu_0,\Sigma_0)\in\Hh^d\times \mathrm{int}(\shell)$. Let $(\hat\mu_n,\hat\Sigma_n)$ be any measurable shell-constrained maximizer of the likelihood. Then
\[
\hat\mu_n\to \mu_0,
\qquad
\hat\Sigma_n\to \Sigma_0
\qquad\text{almost surely}.
\]
Equivalently, in the Euclidean chart,
\[
\hat\vartheta_n\to \vartheta_0
\qquad\text{a.s.}
\]
\end{theorem}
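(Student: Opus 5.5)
The argument follows the classical Wald route, run in the global chart $\vartheta=(\alpha,\beta)$. The covariance coordinate already lives in a compact set: since $\shell$ is compact, $B:=\{\vech(\log\Sigma):\Sigma\in\shell\}$ is compact in $\R^m$. The only noncompactness is therefore in the location $\alpha\in\R^d$. The plan has four steps: (i) localize $\hat\alpha_n$ to a compact ball almost surely for all large $n$ (coercivity); (ii) apply Lemma~\ref{lem:ulln} on $K=\bar B_R\times B$; (iii) show that $M$ has a unique, well-separated maximizer at $\vartheta_0$ on $K$, via Kullback--Leibler identification and Theorem~\ref{thm:identifiability}; (iv) conclude by the standard argmax argument. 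Continuity of $\vartheta\mapsto\mu(\alpha),\Sigma(\beta)$ then transfers the convergence of $\hat\vartheta_n$ to $(\hat\mu_n,\hat\Sigma_n)$.

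\textbf{Step 1 (coercivity).} On $\shell$ we have $\Sigma^{-1}\succeq\lambda_+^{-1}\Id$, $\log\det\Sigma\ge d\log\lambda_-$, and $\psiwrap\ge 0$. Hence
\[
m_{\mu,\Sigma}(x)\le -\tfrac d2\log\lambda_- -\tfrac{1}{2\lambda_+}\rho(\mu,x)^2 .
\]
The triangle inequality gives $\rho(\mu,x)\ge \rho(\mu,\mu_0)-\rho(\mu_0,x)$. Choose $r_0$ with $P(\rho(\mu_0,X)\le r_0)\ge 1/2$. By the SLLN, almost surely for large $n$ at least a third of the sample lies within $r_0$ of $\mu_0$. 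For $\rho(\mu,\mu_0)=\|\alpha\|\ge R$ this yields
\[
\sup_{\Sigma\in\shell}M_n(\mu,\Sigma)\le C-\tfrac{1}{6\lambda_+}(R-r_0)^2 .
\]
On the other side, $M_n(\vartheta_0)\to M(\vartheta_0)>-\infty$ almost surely, which is finite by Lemma~\ref{lem:moments}. Choosing $R$ large enough that the bound above falls below $M(\vartheta_0)-1$, every maximizer satisfies $\|\hat\alpha_n\|\le R$ eventually, almost surely.

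\textbf{Step 2 (uniform convergence).} With $K=\bar B_R\times B$, Lemma~\ref{lem:ulln} gives
\[
\sup_K|M_n-M|\to 0\quad\text{a.s.}
\]
Proposition~\ref{prop:envelope} with $j=0$ implies that $M$ is finite and continuous on $K$ by dominated convergence.

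\textbf{Step 3 (identification).} For every $\vartheta$,
\[
M(\vartheta_0)-M(\vartheta)=\KL\bigl(P_{\vartheta_0}\,\|\,P_\vartheta\bigr)\ge 0,
\]
since both log-densities share the same dropped constant and are integrable under $P_{\vartheta_0}$ by the moment bounds. Equality forces $p_\vartheta=p_{\vartheta_0}$ almost everywhere, hence equality of the distributions, and Theorem~\ref{thm:identifiability} then gives $\vartheta=\vartheta_0$. Because $K$ is compact and $M$ is continuous, for each $\varepsilon>0$
\[
\sup_{\vartheta\in K,\ \|\vartheta-\vartheta_0\|\ge\varepsilon}M(\vartheta)<M(\vartheta_0).
\]

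\textbf{Step 4 (argmax).} Eventually $\hat\vartheta_n\in K$. Since $\vartheta_0\in K$ and $\hat\vartheta_n$ maximizes $M_n$ over $\Theta$,
\[
M(\hat\vartheta_n)\ge M_n(\hat\vartheta_n)-o(1)\ge M_n(\vartheta_0)-o(1)=M(\vartheta_0)-o(1)\quad\text{a.s.}
\]
By the well-separation in Step 3, $\hat\vartheta_n\to\vartheta_0$ almost surely.

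The main obstacle is Step 1. The bound must hold uniformly in $\Sigma$ over the shell and must use only almost-sure empirical control; I handle this with the fraction-of-sample trick above rather than an expectation bound. The interior assumption on $\Sigma_0$ is not needed for consistency itself, only for $\vartheta_0\in K$, which holds as soon as $\Sigma_0\in\shell$.
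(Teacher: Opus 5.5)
Your proposal is correct and follows essentially the same route as the paper: coercive localization of the location, the compact-uniform law of large numbers on $\bar B_R\times\shell$, Kullback--Leibler identification via Theorem~\ref{thm:identifiability}, and the argmax step. The only differences are minor: you localize with a fraction-of-sample bound around $\mu_0$ instead of the paper's Jensen bound $\frac1n\sum_i r_\mu(X_i)^2\ge (D(\mu)-\bar d_n)_+^2$ around $o$, and you write out the well-separation argument that the paper cites as the standard argmax theorem. Your remark that only $\Sigma_0\in\shell$, not interiority, is used is accurate.
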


The next proposition shows that the shell constraint becomes inactive asymptotically at the estimator. This provides the link between the constrained formulation used for existence and the unconstrained profile estimator employed in practice.

\begin{proposition}[Asymptotic inactivity of the shell]
\label{prop:inactive-shell}
Under the assumptions of Theorem~\ref{thm:consistency},
\[
S_n(\hat\mu_n)\to \Sigma_0
\qquad\text{a.s.}
\]
Consequently,
\[
\Prob\bigl(S_n(\hat\mu_n)\in \mathrm{int}(\shell)\bigr)\to 1,
\]
and on that event
\[
\hat\Sigma_n=S_n(\hat\mu_n).
\]
\end{proposition}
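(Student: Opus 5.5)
The plan is to write $S_n(\hat\mu_n)$ as the empirical second moment of the tangent coordinates evaluated at a random but strongly consistent location, and then control it with Lemma~\ref{lem:ulln} on a compact neighbourhood of $\vartheta_0$. Define the population tangent covariance
\[
S(\mu):=\E_{\vartheta_0}\bigl[\T_\mu(X)\T_\mu(X)^\top\bigr].
\]
At $\mu=\mu_0$ we have $\T_{\mu_0}(X)=Z\sim N_d(0,\Sigma_0)$ by the definition of $\mathrm{HWN}_d(\mu_0,\Sigma_0)$, so $S(\mu_0)=\Sigma_0$. Fix the compact set $K_\mu=\{\mu:\rho(\mu_0,\mu)\le 1\}$, equivalently $\{\|\alpha\|\le1\}$ in the chart \eqref{eq:alpha-chart}.

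\textbf{Step 1 (uniform convergence).} I will show that
\[
\sup_{\mu\in K_\mu}\|S_n(\mu)-S(\mu)\|\to0 \quad\text{a.s.}
\]
There are two ways to get this. The first is to read it off Lemma~\ref{lem:ulln} applied to a first derivative of $m_\vartheta$ in $\beta$. Since
\[
\partial_\Sigma m=-\tfrac12\Sigma^{-1}+\tfrac12\Sigma^{-1}\T_\mu\T_\mu^\top\Sigma^{-1},
\]
the entries of $\T_\mu\T_\mu^\top$ at $\Sigma$ fixed (say $\Sigma=\Id$, i.e.\ $\beta=0$) are affine functions of first partials of $m_\vartheta$. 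One must check that this identification survives the $\vech/\log$ chart. The chart derivative at $\beta=0$ is the identity map on symmetric matrices, so it does. The second, more robust route is to repeat the proof of Lemma~\ref{lem:ulln} directly. The map $\mu\mapsto\T_\mu(x)$ is continuous, and $\|\T_\mu(x)\|^2=\rho(\mu,x)^2\le(1+\rho(o,\mu_0)+\rho(o,x))^2$ on $K_\mu$. This envelope is integrable by Lemma~\ref{lem:moments}(a), so the standard Glivenko--Cantelli argument for continuous dominated classes on a compact set applies.

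I expect this step to be the main (mild) obstacle, namely justifying continuity in $\mu$ of $\T_\mu(x)=PT_{o\to\mu}^{-1}\Log_\mu(x)$ jointly in $(\mu,x)$. The point $x=\mu$ is harmless because $\Log$ is smooth on a Hadamard manifold, and parallel transport from $o$ depends smoothly on $\mu$.

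\textbf{Step 2 (continuity and plug-in).} By dominated convergence with the same envelope, $S$ is continuous on $K_\mu$. Theorem~\ref{thm:consistency} gives $\hat\mu_n\to\mu_0$ a.s., so almost surely $\hat\mu_n\in K_\mu$ for all large $n$. Then
\[
\|S_n(\hat\mu_n)-\Sigma_0\|\le\sup_{K_\mu}\|S_n-S\|+\|S(\hat\mu_n)-S(\mu_0)\|\to0 \quad\text{a.s.}
\]

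\textbf{Step 3 (consequences).} The set $\mathrm{int}(\shell)=\{\lambda_-<\lambda_{\min}(\Sigma),\ \lambda_{\max}(\Sigma)<\lambda_+\}$ is open, since eigenvalues depend continuously on the matrix. It contains $\Sigma_0$, so almost surely $S_n(\hat\mu_n)\in\mathrm{int}(\shell)$ eventually, which implies convergence in probability of the indicator. On that event Proposition~\ref{prop:conditionalSigma} says the unique minimizer of $f_{S_n(\hat\mu_n)}$ on $\shell$ is $S_n(\hat\mu_n)$. Since $\hat\Sigma_n$ maximizes $\ell_n(\hat\mu_n,\cdot)$ over $\shell$, by \eqref{eq:sample-loglik2} it minimizes exactly this $f$, hence $\hat\Sigma_n=S_n(\hat\mu_n)$. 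This holds whether $\hat\Sigma_n$ comes from the joint maximizer or from the profile definition \eqref{eq:mu-profile-est}.
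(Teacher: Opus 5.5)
Your proposal is correct and follows the paper's proof essentially step for step. The shared steps are: the population tangent covariance $S(\mu)$ with $S(\mu_0)=\Sigma_0$; a compact-uniform strong law for $S_n(\mu)$ near $\mu_0$; continuity of $S$ combined with strong consistency of $\hat\mu_n$ for the plug-in; openness of $\mathrm{int}(\shell)$; and Proposition~\ref{prop:conditionalSigma} to conclude $\hat\Sigma_n=S_n(\hat\mu_n)$. You also observe that the entries of $S_n(\mu)$ are affine in the $\beta$-scores at $\beta=0$, so Lemma~\ref{lem:ulln} can be cited directly rather than by repeating its argument; this is a tidy refinement but does not change the structure.
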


Taken together, these results show that the shell-constrained likelihood provides a well-posed and statistically consistent framework for inference in the anisotropic HWN model. In the next section, we build on this foundation to derive the asymptotic distribution of the estimator and establish efficiency properties via local asymptotic normality.

\section{Asymptotic normality and efficient profile inference}
\label{sec:normality}

We now develop the second-order asymptotic theory for the shell-constrained maximum likelihood estimator. Building on the consistency results of the previous section, we derive an expansion for the estimator, establish joint asymptotic normality, and characterize the efficient limit distribution of the profiled location parameter. All statements are made in the Euclidean chart $\vartheta=(\alpha,\beta)$ introduced in \eqref{eq:alpha-chart}--\eqref{eq:beta-chart}. Throughout this section, write
\[
\score_{\vartheta}(x)=\partial_\vartheta m_\vartheta(x),
\qquad
\dot\score_{\vartheta}(x)=\partial_\vartheta^2 m_\vartheta(x).
\]

We begin with a standard likelihood expansion showing that the estimator admits an asymptotically linear representation. This yields joint asymptotic normality for the full parameter.

\begin{theorem}[Joint asymptotic normality of the constrained MLE]
\label{thm:joint-asym-normal}
Assume that $X_1,X_2,\dots$ are i.i.d. from $\mathrm{HWN}_d(\mu_0,\Sigma_0)$ and that Assumption~\ref{ass:interior-info} holds. Then
\begin{equation}
\label{eq:joint-linear}
\sqrt n\,(\hat\vartheta_n-\vartheta_0)
=
\info(\vartheta_0)^{-1}\frac1{\sqrt n}\sum_{i=1}^n \score_{\vartheta_0}(X_i)+o_p(1),
\end{equation}
and hence
\begin{equation}
\label{eq:joint-normal}
\sqrt n\,(\hat\vartheta_n-\vartheta_0)\ \Rightarrow\
N\bigl(0,\info(\vartheta_0)^{-1}\bigr).
\end{equation}
\end{theorem}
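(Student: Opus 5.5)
The argument is the classical Taylor expansion of the score equation around $\vartheta_0$ in the chart $(\alpha,\beta)$. Since $\hat\vartheta_n$ is a constrained maximizer, we first need the first-order condition to hold. Theorem~\ref{thm:consistency} gives $\hat\vartheta_n\to\vartheta_0$ a.s., and Assumption~\ref{ass:interior-info} places $\Sigma_0$ in $\mathrm{int}(\shell)$. Hence, on an event $E_n$ with $\Prob(E_n)\to1$, $\hat\Sigma_n$ lies in the interior of the shell. The location parameter is unconstrained because the $\alpha$-chart is global. On $E_n$, $\hat\vartheta_n$ is therefore an interior local maximizer of the smooth function $\vartheta\mapsto M_n(\vartheta)$, so $\sum_i \score_{\hat\vartheta_n}(X_i)=0$. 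Off $E_n$ nothing is needed, since $\Prob(E_n^c)\to 0$ only contributes an $o_p(1)$ term.

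Next, fix a compact ball $K$ around $\vartheta_0$ and expand each coordinate of the score with an integral-form mean value theorem:
\[
0=\frac1{\sqrt n}\sum_{i=1}^n\score_{\vartheta_0}(X_i)+\Bigl(\frac1n\sum_{i=1}^n\int_0^1\dot\score_{\vartheta_0+t(\hat\vartheta_n-\vartheta_0)}(X_i)\,dt\Bigr)\sqrt n(\hat\vartheta_n-\vartheta_0).
\]
The bracketed matrix is the key object, and I expect showing that it converges to $-\info(\vartheta_0)$ to be the main (though mild) obstacle. I would handle it with Lemma~\ref{lem:ulln} applied to the second partial derivatives: $\sup_{\vartheta\in K}\|n^{-1}\sum_i\dot\score_\vartheta(X_i)-\E_{\vartheta_0}\dot\score_\vartheta(X)\|\to0$ a.s. The limit $\vartheta\mapsto\E_{\vartheta_0}\dot\score_\vartheta(X)$ is continuous on $K$ by dominated convergence, using the $j=2$ envelope of Proposition~\ref{prop:envelope} together with Lemma~\ref{lem:moments}. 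Combined with consistency, this gives convergence of the bracket to $\E_{\vartheta_0}\dot\score_{\vartheta_0}(X)$. By Lemma~\ref{lem:scoremean}, that limit equals $-\info(\vartheta_0)$. Alternatively, the third-derivative envelope ($j=3$) yields a direct Lipschitz bound $\|\dot\score_\vartheta-\dot\score_{\vartheta_0}\|\le C_3(1+\rho(o,x)^{q_3})\|\vartheta-\vartheta_0\|$, whose empirical average is $O(1)\cdot o(1)$ a.s.

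The score term is handled by the central limit theorem. Lemma~\ref{lem:scoremean} gives $\E_{\vartheta_0}\score_{\vartheta_0}(X)=0$, and the $(2+\delta)$-moment statement of Proposition~\ref{prop:envelope} gives finite covariance $\info(\vartheta_0)$. The multivariate CLT then yields $n^{-1/2}\sum_i\score_{\vartheta_0}(X_i)\Rightarrow N(0,\info(\vartheta_0))$, and in particular this term is $O_p(1)$.

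To conclude, write the bracket as $-\info(\vartheta_0)+o_p(1)$. Since $\info(\vartheta_0)$ is invertible by Assumption~\ref{ass:interior-info}, the bracket is invertible with probability tending to one, with inverse $-\info(\vartheta_0)^{-1}+o_p(1)$. Solving the expansion gives $\sqrt n(\hat\vartheta_n-\vartheta_0)=\info(\vartheta_0)^{-1}n^{-1/2}\sum_i\score_{\vartheta_0}(X_i)+o_p(1)\,O_p(1)$, which is \eqref{eq:joint-linear}. Slutsky's lemma then yields \eqref{eq:joint-normal}, with covariance $\info(\vartheta_0)^{-1}\info(\vartheta_0)\info(\vartheta_0)^{-1}=\info(\vartheta_0)^{-1}$.
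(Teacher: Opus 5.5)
Your proposal is correct and follows the paper's proof essentially step for step. The shared steps are:
\begin{enumerate}
\item interiority of $\hat\Sigma_n$ with probability tending to one, which yields the score equation;
\item the integral-form Taylor expansion with the averaged Hessian along the segment;
\item Lemma~\ref{lem:ulln}, continuity of $\vartheta\mapsto\E_{\vartheta_0}\dot\score_\vartheta(X)$, and Lemma~\ref{lem:scoremean}, which together identify the limit $-\info(\vartheta_0)$;
\item the multivariate CLT for the score, followed by inversion and Slutsky.
\end{enumerate}
Your alternative Lipschitz bound via the $j=3$ envelope is only a more direct version of what the paper obtains through Lemma~\ref{lem:ulln}, which itself rests on that same envelope.
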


To study the effect of the covariance parameter as a nuisance, we decompose the Fisher information matrix according to the partition $\vartheta=(\alpha,\beta)$:
\begin{equation}
\label{eq:block-info}
\info(\vartheta_0)
=
\begin{pmatrix}
\info_{\alpha\alpha} & \info_{\alpha\beta}\\[0.25em]
\info_{\beta\alpha} & \info_{\beta\beta}
\end{pmatrix}.
\end{equation}
Because $\info(\vartheta_0)$ is positive definite, the nuisance block $\info_{\beta\beta}$ is invertible. Define the efficient information for the location coordinate after profiling out the covariance nuisance:
\begin{equation}
\label{eq:schur}
\info_{\alpha\alpha\cdot\beta}
:=
\info_{\alpha\alpha}
-
\info_{\alpha\beta}\info_{\beta\beta}^{-1}\info_{\beta\alpha}.
\end{equation}
Also define the efficient score
\begin{equation}
\label{eq:efficient-score}
\tilde\score_\alpha(x)
:=
\score_\alpha(x)-\info_{\alpha\beta}\info_{\beta\beta}^{-1}\score_\beta(x).
\end{equation}
We now extract the asymptotic behavior of the profiled location estimator. The resulting limit distribution is governed by the Schur complement of the nuisance information block and corresponds to the efficient information for $\alpha$.

\begin{corollary}[Efficient asymptotic normality of the profiled location estimator]
\label{cor:profile-location}
Under the assumptions of Theorem~\ref{thm:joint-asym-normal},
\begin{equation}
\label{eq:profile-linear}
\sqrt n\,\hat\alpha_n
=
\info_{\alpha\alpha\cdot\beta}^{-1}
\frac1{\sqrt n}\sum_{i=1}^n \tilde\score_\alpha(X_i)
+o_p(1),
\end{equation}
and
\begin{equation}
\label{eq:profile-normal}
\sqrt n\,\hat\alpha_n \Rightarrow
N\bigl(0,\info_{\alpha\alpha\cdot\beta}^{-1}\bigr),
\qquad
\hat\alpha_n:=\Log_{\mu_0}(\hat\mu_n)\in \R^d.
\end{equation}
\end{corollary}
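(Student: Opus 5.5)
The plan is to read the corollary off Theorem~\ref{thm:joint-asym-normal} by taking the $\alpha$-block of the asymptotically linear representation \eqref{eq:joint-linear}. Note that $\hat\alpha_n=\Log_{\mu_0}(\hat\mu_n)$ is exactly the first $d$ coordinates of $\hat\vartheta_n$, and $\alpha_0=0$. So $\sqrt n\,\hat\alpha_n$ equals $[\,I_d\ \ 0\,]\sqrt n(\hat\vartheta_n-\vartheta_0)$. No extra argument is needed about the profile estimator versus the joint estimator. By construction \eqref{eq:mu-profile-est}, $(\hat\mu_n,\hat\Sigma_n)$ is a joint shell-constrained maximizer, so Theorem~\ref{thm:joint-asym-normal} applies to it.

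The key step is the block-inverse formula. Since $\info(\vartheta_0)$ is positive definite, $\info_{\beta\beta}$ is invertible and the Schur complement $\info_{\alpha\alpha\cdot\beta}$ in \eqref{eq:schur} is positive definite. The standard partitioned inverse gives
\[
\bigl[\info(\vartheta_0)^{-1}\bigr]_{\alpha\cdot}
=\Bigl(\info_{\alpha\alpha\cdot\beta}^{-1},\ -\info_{\alpha\alpha\cdot\beta}^{-1}\info_{\alpha\beta}\info_{\beta\beta}^{-1}\Bigr).
\]
Multiplying this row block by $n^{-1/2}\sum_i(\score_\alpha(X_i),\score_\beta(X_i))$, with scores evaluated at $\vartheta_0$, gives $\info_{\alpha\alpha\cdot\beta}^{-1}n^{-1/2}\sum_i\tilde\score_\alpha(X_i)$ by \eqref{eq:efficient-score}. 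The $o_p(1)$ remainder stays $o_p(1)$ after a fixed linear projection. This proves \eqref{eq:profile-linear}.

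For \eqref{eq:profile-normal}, there are two routes, and I will use the first with the second as a check.
\begin{enumerate}
\item Apply the continuous mapping theorem to \eqref{eq:joint-normal}. The limit is $N(0,[\info^{-1}]_{\alpha\alpha})$, and the partitioned inverse shows $[\info^{-1}]_{\alpha\alpha}=\info_{\alpha\alpha\cdot\beta}^{-1}$.
\item Compute directly with the multivariate CLT. By Lemma~\ref{lem:scoremean} the efficient score has mean zero. Its finite second moments follow from Proposition~\ref{prop:envelope}. Its covariance is $\info_{\alpha\alpha}-2\info_{\alpha\beta}\info_{\beta\beta}^{-1}\info_{\beta\alpha}+\info_{\alpha\beta}\info_{\beta\beta}^{-1}\info_{\beta\beta}\info_{\beta\beta}^{-1}\info_{\beta\alpha}=\info_{\alpha\alpha\cdot\beta}$. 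The sandwich $\info_{\alpha\alpha\cdot\beta}^{-1}\info_{\alpha\alpha\cdot\beta}\info_{\alpha\alpha\cdot\beta}^{-1}$ then gives the stated covariance. Slutsky's lemma absorbs the remainder.
\end{enumerate}

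There is no real obstacle, since all analytic work is in Theorem~\ref{thm:joint-asym-normal}. The only point needing care is that the block of the inverse information equals the inverse Schur complement. I will verify this by checking that the displayed row block times $\info(\vartheta_0)$ equals $[\,I_d\ \ 0\,]$.
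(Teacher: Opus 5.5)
Your proposal is correct and follows the paper's proof. You extract the $\alpha$-rows of \eqref{eq:joint-linear}, identify them with $\info_{\alpha\alpha\cdot\beta}^{-1}\tilde\score_\alpha$ via the partitioned-inverse identity, and conclude by the CLT; the paper does the same and uses your second route, $\Cov(\tilde\score_\alpha)=\info_{\alpha\alpha\cdot\beta}$. Your remark that $(\hat\mu_n,\widetilde\Sigma_n(\hat\mu_n))$ is a joint shell-constrained maximizer, so Theorem~\ref{thm:joint-asym-normal} applies, is a step the paper leaves implicit.
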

This shows that profiling out the covariance parameter leads to an asymptotically efficient estimator for the location parameter in the presence of nuisance covariance.

We close this section with a brief comment on the full-parameter interpretation. Although our main decision-theoretic conclusion concerns the location parameter under nuisance covariance, Theorem~\ref{thm:joint-asym-normal} already implies asymptotic efficiency of the full joint estimator $(\hat\alpha_n,\hat\beta_n)$ for smooth local quadratic losses on the whole parameter. The profile focus is motivated by the fact that the covariance is nuisance when estimating location.

\section{Local asymptotic normality and local asymptotic minimaxity}
\label{sec:lan}

We next turn from large-sample distribution theory to the local decision-theoretic implications of the likelihood expansion. The goal is to show that, after localizing the anisotropic HWN experiment around the true parameter, the model converges to a Gaussian shift experiment with Fisher information. This LAN structure then yields the local asymptotic minimax lower bound for estimating the location parameter under squared geodesic loss.

\subsection{The LAN expansion}

For $h\in\R^p$, define the local parameter perturbation
\[
\vartheta_{n,h}:=\vartheta_0+\frac{h}{\sqrt n}.
\]
Because $\Sigma_0$ lies in the interior of the covariance shell, for every compact $C\subset\R^p$ the covariance component of $\vartheta_{n,h}$ also belongs to the interior of the shell for all sufficiently large $n$, uniformly over $h\in C$. Define the log-likelihood ratio
\[
\Lambda_n(h)
:=
\log\frac{dP_{\vartheta_{n,h}}^n}{dP_{\vartheta_0}^n}
=
\sum_{i=1}^n \Bigl(m_{\vartheta_{n,h}}(X_i)-m_{\vartheta_0}(X_i)\Bigr).
\]
We also define the central sequence
\[
\Delta_n:=\frac1{\sqrt n}\sum_{i=1}^n \score_{\vartheta_0}(X_i).
\]
The following result is the key bridge between the likelihood expansion of the previous section and the minimax analysis below.

\begin{theorem}[Local asymptotic normality]
\label{thm:lan}
Assume that the experiment is generated by the anisotropic HWN family and that Assumption~\ref{ass:interior-info} holds. Then, uniformly for $h$ ranging over compact subsets of $\R^p$,
\begin{equation}
\label{eq:lan}
\Lambda_n(h)
=
h^\top \Delta_n
-\frac12 h^\top \info(\vartheta_0) h
+o_{P_{\vartheta_0}}(1),
\end{equation}
and
\[
\Delta_n\Rightarrow N\bigl(0,\info(\vartheta_0)\bigr)
\qquad\text{under }P_{\vartheta_0}^n.
\]
\end{theorem}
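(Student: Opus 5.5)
The plan is a second-order Taylor expansion of the log-likelihood ratio in the global chart $\vartheta=(\alpha,\beta)$, with the remainder controlled by the derivative envelopes of Proposition~\ref{prop:envelope}. Fix a compact $C\subset\R^p$ and a compact neighbourhood $K$ of $\vartheta_0$; then $\vartheta_{n,h}\in K$ for all $h\in C$ and all large $n$. The model is the genuine HWN family in the chart, and $\vartheta_{n,h}$ eventually lies in the interior of the shell, so $\Lambda_n(h)$ is the exact log-likelihood ratio. For each $i$, Taylor's theorem with integral remainder gives
\[
m_{\vartheta_{n,h}}(X_i)-m_{\vartheta_0}(X_i)
=\frac{h^\top\score_{\vartheta_0}(X_i)}{\sqrt n}
+\frac{h^\top\dot\score_{\vartheta_0}(X_i)h}{2n}
+R_{n,i}(h),
\]
where $|R_{n,i}(h)|\le \frac{\|h\|^3}{6n^{3/2}}\sup_{\vartheta\in K}\|\partial^3_\vartheta m_\vartheta(X_i)\|$.

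Summing over $i$, the first-order term equals $h^\top\Delta_n$ exactly. For the quadratic term, write
\[
\frac{1}{2}h^\top\Bigl(\frac1n\sum_{i=1}^n\dot\score_{\vartheta_0}(X_i)\Bigr)h .
\]
By the strong law of large numbers, which applies because the $j=2$ envelope is integrable, the inner average converges a.s.\ to $\E_{\vartheta_0}[\dot\score_{\vartheta_0}(X)]=-\info(\vartheta_0)$ by Lemma~\ref{lem:scoremean}. Since $C$ is bounded, the resulting error is $o(1)$ a.s.\ uniformly over $h\in C$. The remainder is bounded by
\[
\frac{\sup_{h\in C}\|h\|^3}{6\sqrt n}\cdot\frac1n\sum_{i=1}^n\sup_{\vartheta\in K}\|\partial^3 m_\vartheta(X_i)\|.
\]
Proposition~\ref{prop:envelope} with $j=3$ makes the average a.s.\ bounded by the SLLN, so the remainder is $O(n^{-1/2})$ a.s.\ uniformly in $h$. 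Together these give \eqref{eq:lan} uniformly on $C$, in fact with an a.s.\ $o(1)$ error.

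The weak convergence of $\Delta_n$ is the multivariate Lindeberg--L\'evy CLT. The scores $\score_{\vartheta_0}(X_i)$ are i.i.d. They have mean zero by Lemma~\ref{lem:scoremean}. They have finite second moments by the $j=1$ envelope, which has a finite $(2+\delta)$th moment, and their covariance is $\info(\vartheta_0)$ by definition. Hence $\Delta_n\Rightarrow N(0,\info(\vartheta_0))$.

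The main obstacle is ensuring that the chart-based Taylor expansion is legitimate. Two things need checking. First, $\vartheta\mapsto m_\vartheta(x)$ must be $C^3$ on a convex neighbourhood of $\vartheta_0$, including at points where $x=\mu(\alpha)$, i.e.\ where $\T_\mu(x)$ vanishes. This is guaranteed by the smoothness of $\psiwrap$ and by the smoothness of $\T_{\mu(\alpha)}(x)$ in $\alpha$ on a Hadamard manifold; both are already packaged in Proposition~\ref{prop:envelope}. Second, the envelope constants must be uniform over the compact $K$, which is exactly the statement of \eqref{eq:envelope} with $K_\mu=\mu(\text{proj}_\alpha K)$ and $B=\text{proj}_\beta K$. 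Once these are invoked, the remaining steps are routine.
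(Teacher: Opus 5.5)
Your proposal is correct and follows essentially the same route as the paper: a third-order Taylor expansion in the chart $(\alpha,\beta)$, with the remainder controlled by the $j=3$ envelope and the strong law, and the empirical Hessian at $\vartheta_0$ converging to $-\info(\vartheta_0)$ via Lemma~\ref{lem:scoremean}. The CLT for $\Delta_n$ is then applied to the i.i.d.\ mean-zero scores. Two small differences: you use the plain SLLN at the fixed point $\vartheta_0$ where the paper invokes the compact-uniform law, and you explicitly note that the neighbourhood must contain the Taylor segments. Both are harmless refinements.
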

The LAN expansion is formulated in Euclidean local coordinates, whereas the natural loss for the original problem is geodesic loss on \(\Hh^d\). We therefore record the local equivalence between squared geodesic distance and squared Euclidean distance in normal coordinates.

\subsection{Local geodesic loss and Euclidean normal coordinates}

Because the parameter of interest lives on the manifold $\Hh^d$, the relevant loss is squared geodesic loss:
\[
L(\hat\mu,\mu):=\rho(\hat\mu,\mu)^2.
\]
To connect this loss to LAN theory, we need the local equivalence between geodesic distance and Euclidean distance in normal coordinates.

\begin{lemma}[Local equivalence of squared geodesic and Euclidean loss]
\label{lem:loss-equivalence}
Let $u_n,v_n\in T_{\mu_0}\Hh^d$ satisfy $\sqrt n\,u_n$ and $\sqrt n\,v_n$ bounded. Then
\[
n\,\rho\bigl(\Exp_{\mu_0}(u_n),\Exp_{\mu_0}(v_n)\bigr)^2
-
\norm{\sqrt n\,(u_n-v_n)}^2
\to 0.
\]
More generally, the convergence is uniform whenever $(\sqrt n\,u_n,\sqrt n\,v_n)$ ranges over a fixed compact subset of $T_{\mu_0}\Hh^d\times T_{\mu_0}\Hh^d$.
\end{lemma}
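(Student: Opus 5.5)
The plan is to compare the Riemannian metric in normal coordinates at $\mu_0$ with the flat metric on $T_{\mu_0}\Hh^d\cong\R^d$. Write $g(w)$ for the pullback of the hyperbolic metric under $\Exp_{\mu_0}$ at $w\in T_{\mu_0}\Hh^d$. The explicit form in polar coordinates is $g = dr^2 + \sinh^2(r)\,d\omega^2$. Since $\sinh(r)/r = 1+O(r^2)$, this gives
\[
\bigl|g(w)(\xi,\xi)-\norm{\xi}^2\bigr|\le C\norm{w}^2\norm{\xi}^2,
\qquad \norm{w}\le 1,
\]
for all tangent vectors $\xi$. Equivalently, the Gauss lemma together with the Jacobi-field bound $\norm{d\Exp_{\mu_0}(w)\xi}$ between $\norm{\xi}$ and $\frac{\sinh\norm{w}}{\norm{w}}\norm{\xi}$ yields the same two-sided estimate $(1-C\norm{w}^2)\norm{\xi}\le\norm{\xi}_{g(w)}\le(1+C\norm{w}^2)\norm{\xi}$. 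In fact the lower bound holds with constant $1$, by Hadamard comparison.

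Step 1 is the upper bound. Set $r_n:=\max(\norm{u_n},\norm{v_n})\le R/\sqrt n$, where $R$ bounds $\sqrt n\,u_n$ and $\sqrt n\,v_n$ uniformly over the compact set. Take the Euclidean segment $t\mapsto v_n+t(u_n-v_n)$. It stays in the ball of radius $r_n$, so its $g$-length is at most $(1+Cr_n^2)\norm{u_n-v_n}$. Hence
\[
\rho\bigl(\Exp_{\mu_0}(u_n),\Exp_{\mu_0}(v_n)\bigr)\le(1+Cr_n^2)\norm{u_n-v_n}.
\]

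Step 2 is the lower bound. The minimizing geodesic between $\Exp_{\mu_0}(u_n)$ and $\Exp_{\mu_0}(v_n)$ stays inside the geodesic ball of radius $r_n$ about $\mu_0$, because balls in a Hadamard manifold are convex. Its preimage is a curve in the Euclidean ball of radius $r_n$, so by the lower metric bound its length is at least $(1-Cr_n^2)\norm{u_n-v_n}$. (Alternatively, the lower bound $\rho\ge\norm{u_n-v_n}$ holds directly by the Rauch/CAT(0) comparison, since $\Log_{\mu_0}$ is 1-Lipschitz in nonpositive curvature.)

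Step 3 combines the two bounds. Squaring and multiplying by $n$ gives
\[
\Bigl|n\rho^2-\norm{\sqrt n(u_n-v_n)}^2\Bigr|\le 3C r_n^2\,\norm{\sqrt n(u_n-v_n)}^2\le 3C\frac{R^2}{n}(2R)^2\to0.
\]
This bound depends only on $R$, which gives uniformity over compact sets.

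The main obstacle is making the metric comparison $g(w)=I+O(\norm{w}^2)$ rigorous with an explicit constant. I would do this through the Jacobi-field formula in constant curvature $-1$. There the radial component is preserved and the orthogonal components are scaled by $\sinh\norm{w}/\norm{w}\in[1,1+\norm{w}^2]$ for $\norm{w}\le1$, so $C=1$ works near the origin.
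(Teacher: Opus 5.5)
Your proof is correct and is essentially the paper's argument: both reduce the lemma to the normal-coordinate estimate $g(w)=I+O(\|w\|^2)$. That estimate gives $\rho^2=\|u_n-v_n\|^2\bigl(1+O(r_n^2)\bigr)$, hence an $O(1/n)$ error uniform over compacts. The only difference is that the paper cites the resulting squared-distance expansion as standard, whereas you derive it explicitly by sandwiching $\rho$ between the $g$-length of the pulled-back Euclidean segment and the comparison bound $\rho\ge\|u_n-v_n\|$ (the paper's inequality \eqref{eq:exp-nondecreasing}).
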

With this local loss equivalence in hand, we can translate the Gaussian-shift lower bound back to the original manifold-valued estimation problem.

\subsection{The \HajekLecam lower bound}

Write $h=(h_\alpha,h_\beta)$ according to the location/nuisance decomposition. Define the local alternatives
\[
\mu_{n,h}:=\Exp_{\mu_0}(h_\alpha/\sqrt n),
\qquad
\Sigma_{n,h}:=\exp\bigl(\mat(\beta_0+h_\beta/\sqrt n)\bigr),
\qquad
P_{n,h}:=P_{(\mu_{n,h},\Sigma_{n,h})}^n .
\]
For an estimator $\tilde\mu_n$, write
\[
\tilde a_n:=\sqrt n\,\Log_{\mu_0}(\tilde\mu_n)\in\R^d .
\]
This local coordinate is globally defined because $\Hh^d$ is Hadamard.

The proof of the lower bound uses two facts. First, the LAN experiment with nuisance covariance has efficient information for the location coordinate equal to
\[
\info_{\alpha\alpha\cdot\beta}
=
\info_{\alpha\alpha}-\info_{\alpha\beta}\info_{\beta\beta}^{-1}\info_{\beta\alpha}.
\]
Second, in a Hadamard manifold with nonpositive sectional curvature the exponential map is distance nondecreasing: for all $u,v\in T_{\mu_0}\Hh^d$,
\begin{equation}
\label{eq:exp-nondecreasing}
\rho\bigl(\Exp_{\mu_0}u,\Exp_{\mu_0}v\bigr)\ge \|u-v\|.
\end{equation}
This follows from the Cartan--Hadamard theorem and the Rauch comparison theorem \citep{docarmo_1992_RiemannianGeometry}. In  hyperbolic space it can also be checked directly from the hyperbolic law of cosines.

The lower bound is stated first for truncated squared geodesic loss, which avoids unnecessary integrability assumptions on arbitrary estimator sequences. The ordinary squared-loss bound then follows by monotone convergence.
\begin{theorem}[Local asymptotic minimax lower bound]
\label{thm:lower-bound}
Assume that the local experiments are generated by the anisotropic HWN family and that Assumption~\ref{ass:interior-info} holds. Let $\tilde\mu_n$ be any sequence of estimators with values in $\Hh^d$. If
$Z\sim N(0,\info_{\alpha\alpha\cdot\beta}^{-1})$, then for every $A<\infty$,
\begin{equation}
\label{eq:minimax-lower-trunc}
\lim_{c\to\infty}\ \liminf_{n\to\infty}\
\sup_{\|h\|\le c}
\E_{n,h}\Bigl[A\wedge n\rho\bigl(\tilde\mu_n,\mu_{n,h}\bigr)^2\Bigr]
\ge
\E\bigl[A\wedge \|Z\|^2\bigr].
\end{equation}
Consequently,
\begin{equation}
\label{eq:minimax-lower}
\lim_{c\to\infty}\ \liminf_{n\to\infty}\
\sup_{\|h\|\le c}
\E_{n,h}\Bigl[n\rho\bigl(\tilde\mu_n,\mu_{n,h}\bigr)^2\Bigr]
\ge
\tr\bigl(\info_{\alpha\alpha\cdot\beta}^{-1}\bigr).
\end{equation}
\end{theorem}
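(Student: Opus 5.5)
The plan is to reduce the manifold-valued problem to the Euclidean \HajekLecam local asymptotic minimax theorem for the LAN sequence of Theorem~\ref{thm:lan}, using \eqref{eq:exp-nondecreasing} to pass from geodesic to Euclidean loss. Write $\tilde a_n=\sqrt n\,\Log_{\mu_0}(\tilde\mu_n)$, which is a measurable function of the data because the logarithm chart is global. Since $\mu_{n,h}=\Exp_{\mu_0}(h_\alpha/\sqrt n)$, inequality \eqref{eq:exp-nondecreasing} gives, for every $h$ and $n$,
\[
n\,\rho\bigl(\tilde\mu_n,\mu_{n,h}\bigr)^2\ \ge\ \bigl\|\tilde a_n-h_\alpha\bigr\|^2 .
\]
Because $t\mapsto A\wedge t$ is nondecreasing, the left side of \eqref{eq:minimax-lower-trunc} is therefore bounded below by
\[
\lim_{c\to\infty}\liminf_{n\to\infty}\sup_{\|h\|\le c}\E_{n,h}\bigl[A\wedge\|\tilde a_n-h_\alpha\|^2\bigr].
\]
This is a Euclidean estimation problem for the linear functional $\psi(h)=h_\alpha$ of the local parameter. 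Notably, Lemma~\ref{lem:loss-equivalence} is not needed for the lower bound; the global comparison inequality does all the work and spares us a tightness or truncation argument for estimators that wander far from $\mu_0$.

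Next I would verify that the local experiments $\{P_{n,h}:h\in\R^p\}$ coincide with $\{P^n_{\vartheta_{n,h}}\}$ in the chart $(\alpha,\beta)$. This holds because $\mu_{n,h}=\mu(h_\alpha/\sqrt n)$ and $\Sigma_{n,h}=\Sigma(\beta_0+h_\beta/\sqrt n)$, and for large $n$ the covariance lies in $\mathrm{int}(\shell)$ uniformly over $\|h\|\le c$. Theorem~\ref{thm:lan} then gives LAN with nonsingular information $\info(\vartheta_0)$ by Assumption~\ref{ass:interior-info}. I would then invoke the \HajekLecam local asymptotic minimax theorem, for instance Theorem~8.11 of \citet{vandervaart_1998_AsymptoticStatistics}, applied to the differentiable functional $\psi(\vartheta)=\alpha$ with derivative $\dot\psi=(I_d,0)$, the estimator sequence $T_n=\tilde a_n/\sqrt n$, and the bowl-shaped, lower semicontinuous loss $\ell(z)=A\wedge\|z\|^2$. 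The resulting lower bound is $\E\,\ell(Z)$ with $Z\sim N(0,\dot\psi\,\info(\vartheta_0)^{-1}\dot\psi^\top)$. By block inversion, $\dot\psi\,\info^{-1}\dot\psi^\top$ equals the $\alpha\alpha$ block of $\info(\vartheta_0)^{-1}$, which is $\info_{\alpha\alpha\cdot\beta}^{-1}$. This yields \eqref{eq:minimax-lower-trunc}.

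The main point requiring care is matching the form of the cited theorem. Van der Vaart states the bound with $\sup_I\liminf_n\max_{h\in I}$ over finite sets $I$, whereas \eqref{eq:minimax-lower-trunc} uses $\lim_c\liminf_n\sup_{\|h\|\le c}$. Every finite set lies in some ball, and the quantity is monotone in $c$, so the bound transfers. In addition, $\sqrt n(\psi(\vartheta_{n,h})-\psi(\vartheta_0))=h_\alpha$ holds exactly, so no remainder arises.

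For \eqref{eq:minimax-lower}, I would use $n\rho^2\ge A\wedge n\rho^2$ to obtain, for each fixed $A$, a lower bound of $\E[A\wedge\|Z\|^2]$ on the left side of \eqref{eq:minimax-lower}. Letting $A\to\infty$ and applying monotone convergence gives $\E\|Z\|^2=\tr(\info_{\alpha\alpha\cdot\beta}^{-1})$.
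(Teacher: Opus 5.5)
Your proposal is correct and follows the paper's proof. The comparison inequality \eqref{eq:exp-nondecreasing} bounds the truncated geodesic loss below by $A\wedge\|\tilde a_n-h_\alpha\|^2$, the \HajekLecam theorem for the LAN sequence of Theorem~\ref{thm:lan} with the coordinate functional $h\mapsto h_\alpha$ gives $\E[A\wedge\|Z\|^2]$ with the Schur-complement covariance, and monotone convergence in $A$ finishes. You apply the two steps in the reverse order and spell out the finite-set-versus-ball bookkeeping the paper leaves implicit. One minor point: van der Vaart's Theorem~8.11 is stated under differentiability in quadratic mean, but its argument needs only the LAN property you already have, and H\'ajek's original formulation, which the paper cites, is stated directly under LAN.
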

The preceding theorem applies to every estimator and gives the benchmark risk. We now show that the profiled location MLE attains this benchmark. For bounded and truncated losses this follows from regular efficiency. For unbounded squared loss, a uniform-integrability condition is needed to pass from convergence in distribution to convergence of risks.

\subsection{Attainment by the profile MLE}

The lower bound above is unconditional. Attainment by the profile MLE is immediate for bounded and truncated losses from regular efficiency. For ordinary unbounded squared loss, one additional moment condition is needed; without it, convergence in distribution does not imply convergence of risks.

\begin{assumption}[Local uniform integrability of squared location risk]
\label{ass:ui}
For every $c<\infty$, the random variables
\[
\left\{n\rho(\hat\mu_n,\mu_{n,h})^2:
 n\ge 1,\ \|h\|\le c\right\}
\]
are uniformly integrable under the corresponding laws $P_{n,h}$. Equivalently,
\[
\lim_{A\to\infty}\ \limsup_{n\to\infty}\
\sup_{\|h\|\le c}
\E_{n,h}\left[n\rho(\hat\mu_n,\mu_{n,h})^2
\mathbf 1\{n\rho(\hat\mu_n,\mu_{n,h})^2>A\}\right]=0 .
\]
\end{assumption}

\begin{theorem}[Local asymptotic minimaxity of the profiled location estimator]
\label{thm:lam-profile}
Assume that $X_1,X_2,\dots$ are i.i.d. from the local anisotropic HWN experiments defined above and that Assumption~\ref{ass:interior-info} holds. Let  $Z\sim N(0,\info_{\alpha\alpha\cdot\beta}^{-1})$. Then, for every fixed $A<\infty$ and $c<\infty$,
\begin{equation}
\label{eq:lam-attain-trunc}
\lim_{n\to\infty}\
\sup_{\|h\|\le c}
\left|
\E_{n,h}\Bigl[A\wedge n\rho\bigl(\hat\mu_n,\mu_{n,h}\bigr)^2\Bigr]
-
\E[A\wedge\|Z\|^2]
\right|=0.
\end{equation}
Consequently, the profile MLE attains the local asymptotic minimax bound for every truncated squared geodesic loss. Additionally, if Assumption~\ref{ass:ui} holds, then it also attains the untruncated squared-loss bound:
\begin{equation}
\label{eq:lam-attain}
\lim_{c\to\infty}\ \limsup_{n\to\infty}\
\sup_{\|h\|\le c}
\E_{n,h}\Bigl[n\rho\bigl(\hat\mu_n,\mu_{n,h}\bigr)^2\Bigr]
\le
\tr\bigl(\info_{\alpha\alpha\cdot\beta}^{-1}\bigr).
\end{equation}
Together with \eqref{eq:minimax-lower}, this yields local asymptotic minimaxity under ordinary squared geodesic loss whenever the stated uniform-integrability condition is satisfied.
\end{theorem}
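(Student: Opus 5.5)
The plan is to establish regularity of $\hat\alpha_n$ along local alternatives. Then convert the Euclidean limit into the geodesic loss using Lemma~\ref{lem:loss-equivalence}, and pass to expectations through bounded convergence (truncated loss) or uniform integrability (untruncated loss).

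\emph{Step 1 (regularity under local alternatives).} Under $P_{\vartheta_0}^n$, Corollary~\ref{cor:profile-location} gives $\sqrt n\,\hat\alpha_n=\info_{\alpha\alpha\cdot\beta}^{-1}\frac1{\sqrt n}\sum_i\tilde\score_\alpha(X_i)+o_p(1)$. Theorem~\ref{thm:lan} gives the expansion $\Lambda_n(h)=h^\top\Delta_n-\frac12h^\top\info h+o_p(1)$. The pair $(\sqrt n\hat\alpha_n,\Lambda_n(h))$ is therefore jointly asymptotically normal under $P_{\vartheta_0}^n$, with covariance
\[
\Cov\bigl(\info_{\alpha\alpha\cdot\beta}^{-1}\tilde\score_\alpha,\,h^\top\score\bigr)=\info_{\alpha\alpha\cdot\beta}^{-1}\E[\tilde\score_\alpha\score^\top]h=h_\alpha .
\]
This uses $\E[\tilde\score_\alpha\score_\beta^\top]=0$ and $\E[\tilde\score_\alpha\score_\alpha^\top]=\info_{\alpha\alpha\cdot\beta}$. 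Contiguity and Le Cam's third lemma then give $\sqrt n\hat\alpha_n\Rightarrow N(h_\alpha,\info_{\alpha\alpha\cdot\beta}^{-1})$ under $P_{n,h}$. Equivalently, $\sqrt n\hat\alpha_n-h_\alpha\Rightarrow Z$. Note that $\hat\mu_n$ is computed from the data alone, so the same estimator appears under every $P_{n,h}$.

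To make this uniform over $\|h\|\le c$, argue by contradiction. Suppose the supremum in \eqref{eq:lam-attain-trunc} does not vanish. Pick $h_n$ with $\|h_n\|\le c$ attaining the supremum up to $1/n$, and pass to a subsequence along which $h_n\to h$. The LAN expansion is uniform on compacts, so $\Lambda_n(h_n)-\Lambda_n(h)\to0$ in probability. Le Cam's third lemma therefore still applies along $h_n$ and gives the limit $N(h,\cdot)$.

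\emph{Step 2 (loss translation).} Write $u_n=\hat\alpha_n/\sqrt n\cdot\sqrt n$, i.e.\ $u_n=\hat\alpha_n$, and $v_n=h_\alpha/\sqrt n$. Then $n\rho(\hat\mu_n,\mu_{n,h})^2=n\rho(\Exp_{\mu_0}u_n,\Exp_{\mu_0}v_n)^2$. Because $\sqrt n\hat\alpha_n$ is tight under $P_{n,h_n}$, for any $\varepsilon>0$ there is a compact set on which $(\sqrt n u_n,\sqrt n v_n)$ lies with probability at least $1-\varepsilon$. On that event Lemma~\ref{lem:loss-equivalence} gives $n\rho^2-\|\sqrt n\hat\alpha_n-h_\alpha\|^2\to0$ uniformly. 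Hence $n\rho(\hat\mu_n,\mu_{n,h_n})^2\Rightarrow\|Z\|^2$ under $P_{n,h_n}$. The map $x\mapsto A\wedge x$ is bounded and continuous, so $\E_{n,h_n}[A\wedge n\rho^2]\to\E[A\wedge\|Z\|^2]$. This contradicts the assumed failure of uniform convergence and proves \eqref{eq:lam-attain-trunc}.

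\emph{Step 3 (untruncated loss).} Bound the expectation by
\[
\E_{n,h}[n\rho^2]\le\E_{n,h}[A\wedge n\rho^2]+\E_{n,h}\bigl[n\rho^2\mathbf 1\{n\rho^2>A\}\bigr].
\]
Take $\sup_{\|h\|\le c}$ and $\limsup_n$. The first term tends to $\E[A\wedge\|Z\|^2]\le\tr(\info_{\alpha\alpha\cdot\beta}^{-1})$ by \eqref{eq:lam-attain-trunc}. Then let $A\to\infty$; the second term vanishes by Assumption~\ref{ass:ui}. The resulting bound holds for every $c$, which gives \eqref{eq:lam-attain}. Combining with \eqref{eq:minimax-lower} yields equality.

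The main obstacle is Step 1's uniformity in $h$ together with the joint convergence needed for the third lemma. This requires that the $o_p(1)$ terms in Corollary~\ref{cor:profile-location} remain negligible under $P_{n,h}$, which follows from contiguity, itself a consequence of LAN. It also requires that the LAN remainder be uniform on compacts, which Theorem~\ref{thm:lan} supplies.
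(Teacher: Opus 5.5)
Your proof is correct. Steps 2 and 3 are essentially the paper's argument: tightness plus Lemma~\ref{lem:loss-equivalence} passes from $\|\sqrt n\hat\alpha_n-h_\alpha\|^2$ to the truncated geodesic loss, and the split $x\le A\wedge x+x\mathbf 1\{x>A\}$ together with Assumption~\ref{ass:ui} handles the untruncated loss.

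\textbf{Where you differ: Step 1.}
\begin{itemize}
\item \emph{The paper's route.} It obtains regularity by re-running the proof of Theorem~\ref{thm:joint-asym-normal} with the Taylor expansion centered at the moving truth $\vartheta_{n,h}$. This uses common derivative envelopes on one compact neighborhood, uniform Hessian convergence, $\info(\vartheta_{n,h})\to\info(\vartheta_0)$, and a Lindeberg CLT for triangular score arrays. The result is a linear expansion of $\sqrt n(\hat\vartheta_n-\vartheta_{n,h})$ under $P_{n,h}$, uniformly in $\|h\|\le c$.
\item \emph{Your route.} You use only what is already proved under $P_{\vartheta_0}^n$, namely Corollary~\ref{cor:profile-location} and the compact-uniform LAN of Theorem~\ref{thm:lan}. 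You compute the limiting covariance $h_\alpha$ between $\sqrt n\hat\alpha_n$ and $\Lambda_n(h)$ and transfer to $P_{n,h}$ by Le Cam's third lemma. Uniformity over $\|h\|\le c$ then comes from a subsequence/compactness argument that exploits $\Lambda_n(h_n)-\Lambda_n(h)\to 0$.
\end{itemize}

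\textbf{What each buys.} Your route is more economical. It also sidesteps points the paper passes over quickly:
\begin{itemize}
\item Lemma~\ref{lem:ulln} is an almost-sure statement under the fixed law $P_{\vartheta_0}$, not under the moving laws $P_{n,h}$.
\item Consistency and the first-order condition for $\hat\vartheta_n$ under $P_{n,h}$ are themselves most naturally justified by contiguity, which your argument uses directly.
\end{itemize}
The paper's route, carried out fully, gives a stronger by-product: an explicit asymptotically linear representation of the whole $\hat\vartheta_n$ under every local law. Yours delivers exactly the marginal regularity of $\sqrt n\hat\alpha_n$ that the theorem needs.

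\textbf{Two small slips.}
\begin{itemize}
\item The third lemma yields $N(h_\alpha,\info_{\alpha\alpha\cdot\beta}^{-1})$ under $P_{n,h}$, not $N(h,\cdot)$.
\item Your definition of $u_n$ should simply read $u_n=\hat\alpha_n=\Log_{\mu_0}(\hat\mu_n)$.
\end{itemize}
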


The minimax statements proved above are local asymptotic statements, not exact finite-sample minimaxity on the whole manifold. The latter is much more delicate on noncompact curved parameter spaces and does not follow from symmetry alone. For the profile-MLE route, the LAN result of Theorem~\ref{thm:lan} and the \HajekLecam theorem provide the correct and standard decision-theoretic conclusion. The untruncated squared-loss attainment is deliberately separated from the truncated-loss result because it requires uniform integrability of local risks.

Thus the profile likelihood estimator has the expected optimality property for regular finite-dimensional likelihood problems. It is efficient in the LAN limit experiment and locally asymptotically minimax for the location parameter, with the covariance matrix treated as nuisance. The qualification concerning unbounded squared loss is purely technical and reflects the standard distinction between weak convergence and convergence of second moments.

\section{Computation}
\label{sec:computation}

The preceding sections established the statistical properties of the shell-constrained profile likelihood estimator. We now describe how the estimator is computed in practice. The main point is that the covariance parameter can be profiled out exactly for each candidate location, leaving a finite-dimensional optimization problem only in the location variable. The shell constraint, introduced for finite-sample existence and stability, is handled by a simple spectral clipping operation.

\subsection{Closed-form covariance profiling}

For a fixed location $\mu$, recall the empirical tangent scatter matrix
\[
S_n(\mu)
:=
\frac1n\sum_{i=1}^n
\T_\mu(X_i)\T_\mu(X_i)^\top .
\]
The covariance part of the negative profile log-likelihood is
\[
\Sigma\mapsto
\log\det\Sigma+\tr\{S_n(\mu)\Sigma^{-1}\},
\qquad
\Sigma\in\shell .
\]
Thus the nuisance optimization is a constrained covariance-fitting problem over the spectral shell. The next proposition gives its exact solution.

\begin{proposition}[Spectral form of the shell-constrained covariance profile]
\label{prop:spectral-clipping}
Let $S\in\Sym(d)$ be positive semidefinite and write its spectral decomposition as
\[
S=U\,\diag(s_1,\dots,s_d)\,U^\top,
\qquad s_j\ge 0.
\]
Then the unique minimizer of
\[
\Sigma\mapsto f_S(\Sigma)
=
\log\det\Sigma+\tr(S\Sigma^{-1})
\qquad\text{over}\qquad
\shell=\{\lambda_-\Id\preceq \Sigma\preceq \lambda_+\Id\}
\]
is
\begin{equation}
\label{eq:spectral-clipping}
\widetilde\Sigma(S)
=
U\,\diag(\widetilde s_1,\dots,\widetilde s_d)\,U^\top,
\qquad
\widetilde s_j
=
\min\{\lambda_+,\max\{\lambda_-,s_j\}\}.
\end{equation}
Equivalently, $\widetilde\Sigma(S)=g(S)$ by functional calculus, where
\[
g(s)=\min\{\lambda_+,\max\{\lambda_-,s\}\}.
\]
\end{proposition}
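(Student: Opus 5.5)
The plan is to reparametrize by the precision matrix and reduce the problem to scalar optimizations. Write $P=\Sigma^{-1}$, so that
\[
f_S(\Sigma)=-\log\det P+\tr(SP)=:F(P).
\]
The shell $\shell$ maps bijectively onto $\shell'=\{\lambda_+^{-1}\Id\preceq P\preceq\lambda_-^{-1}\Id\}$. This set is convex, and $F$ is strictly convex on it, since $-\log\det$ is strictly convex on $\SPD(d)$ and $P\mapsto\tr(SP)$ is linear. Existence of a minimizer is already given by Proposition~\ref{prop:conditionalSigma} through compactness, and strict convexity then gives uniqueness. It therefore suffices to check that $\widetilde P:=\widetilde\Sigma(S)^{-1}$ satisfies the first-order variational inequality for a convex problem,
\[
\langle \nabla F(\widetilde P),\,P-\widetilde P\rangle\ge 0\quad\text{for all }P\in\shell'.
\]
We have $\nabla F(P)=S-P^{-1}$, so $\nabla F(\widetilde P)=S-\widetilde\Sigma(S)=U\diag(s_j-\widetilde s_j)U^\top$.

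The key step is verifying this inequality. Write $D:=\diag(s_j-\widetilde s_j)$, and let $Q=U^\top P U$, which ranges over the same spectral shell $\shell'$ because that shell is unitarily invariant. The inequality becomes
\[
\sum_j (s_j-\widetilde s_j)\bigl(Q_{jj}-\widetilde s_j^{-1}\bigr)\ge 0 .
\]
Since $\lambda_+^{-1}\Id\preceq Q\preceq\lambda_-^{-1}\Id$, each diagonal entry satisfies $Q_{jj}\in[\lambda_+^{-1},\lambda_-^{-1}]$. We check the three cases for each $j$:
\begin{itemize}
\item If $s_j\in[\lambda_-,\lambda_+]$, the term is zero.
\item If $s_j<\lambda_-$, then $\widetilde s_j=\lambda_-$. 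Here $s_j-\widetilde s_j<0$ and $Q_{jj}-\lambda_-^{-1}\le 0$, so the product is nonnegative.
\item If $s_j>\lambda_+$, then $\widetilde s_j=\lambda_+$. Here $s_j-\widetilde s_j>0$ and $Q_{jj}-\lambda_+^{-1}\ge0$, so the product is again nonnegative.
\end{itemize}
Summing over $j$ gives the inequality, so $\widetilde P$ is the global minimizer of $F$ on $\shell'$. Hence $\widetilde\Sigma(S)$ is the unique minimizer of $f_S$ on $\shell$, and it equals $g(S)$ by functional calculus. Note that this argument allows $S$ to be singular ($s_j=0$), because $F$ remains finite and strictly convex on $\shell'$.

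The main obstacle is the convexity and variational-inequality setup rather than the computation. One has to notice that $f_S$ is not convex in $\Sigma$ but is convex in $P=\Sigma^{-1}$, and that the constraint set stays convex under inversion. One also needs the reduction that only the diagonal of $U^\top PU$ enters, which is what avoids any commutation assumption between $P$ and $S$. Strict convexity of $-\log\det$ follows from its Hessian $H\mapsto\tr(P^{-1}HP^{-1}H)>0$ for $H\neq0$, and that is all that is needed for uniqueness.
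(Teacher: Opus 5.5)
Your proof is correct, but it takes a different route from the paper's.

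\textbf{The paper's route.} The paper works directly in $\Sigma$. After reducing to diagonal $S$ by orthogonal equivariance, it takes a minimizer $B$, whose existence follows from compactness. It perturbs $B$ along the isospectral curves $e^{tK}Be^{-tK}$ with $K$ skew-symmetric; these curves preserve both $\log\det$ and the shell. First-order stationarity then forces $[S,B^{-1}]=0$. The problem therefore splits over the eigenspaces of $S$ into scalar problems $\min_{\sigma\in[\lambda_-,\lambda_+]}(\log\sigma+s/\sigma)$. Their unique solutions give both the clipping formula and uniqueness, after a little bookkeeping for repeated eigenvalues.

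\textbf{Your route.} You pass to the precision $P=\Sigma^{-1}$. There the objective is strictly convex and the image of $\shell$ is still convex. You then certify the candidate $\widetilde P$ directly through the variational inequality, which involves only the diagonal of $U^\top PU$.

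\textbf{Comparison.} Your argument proves sufficiency rather than necessity. It needs no existence step and no commutation lemma. It also needs no separate treatment of repeated eigenvalues, because uniqueness follows immediately from strict convexity. It also makes explicit that the inner covariance problem is convex in precision coordinates, which the paper never states.

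The paper's argument derives necessary conditions and never uses convexity. Its commutation step relies only on the orthogonal invariance of $\log\det$ and of the constraint set. That step would therefore still apply to constraint sets that are orthogonally invariant but not convex in $\Sigma^{-1}$, where your first-order certificate is not available as stated.
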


Following the above, the covariance profile obtained by clipping the eigenvalues of the empirical tangent covariance matrix. Thus each outer location iterate requires only the evaluation of tangent coordinates and one eigendecomposition of a $d\times d$ symmetric matrix.

\subsection{The profiled objective}

Combining the likelihood with Proposition~\ref{prop:spectral-clipping}, define
\[
\widetilde\Sigma_n(\mu)
:=
\widetilde\Sigma\{S_n(\mu)\}.
\]
Up to additive constants independent of $\mu$, the shell-constrained profiled negative log-likelihood is
\begin{equation}
\label{eq:shell-profile-objective}
Q_n^{\shell}(\mu)
=
\frac n2\log\det\widetilde\Sigma_n(\mu)
+
\frac n2\tr\{S_n(\mu)\widetilde\Sigma_n(\mu)^{-1}\}
+
(d-1)\sum_{i=1}^n
\psiwrap\{\T_\mu(X_i)\}.
\end{equation}
The shell-constrained profile estimator is any minimizer of
\[
\mu\mapsto Q_n^{\shell}(\mu),
\]
and the covariance estimate is then
\[
\hat\Sigma_n=\widetilde\Sigma_n(\hat\mu_n).
\]

When $S_n(\mu)$ is positive definite and lies in the interior of the shell, the clipping operation is inactive and $\widetilde\Sigma_n(\mu)=S_n(\mu)$. In that case,
\[
\tr\{S_n(\mu)\widetilde\Sigma_n(\mu)^{-1}\}=d,
\]
so the location-dependent part of the criterion reduces to
\begin{equation}
\label{eq:practical-objective}
Q_n(\mu)
=
\frac n2\log\det S_n(\mu)
+
(d-1)\sum_{i=1}^n
\psiwrap\{\T_\mu(X_i)\}.
\end{equation}
This is the practical profile objective used in unconstrained computations. By Proposition~\ref{prop:inactive-shell}, the shell-constrained and practical profile estimators coincide with probability tending to one under the correctly specified model whenever $\Sigma_0$ lies in the interior of the shell.

\subsection{Global coordinates for location optimization}

For computation,  it is convenient to parameterize the location by a global normal coordinate at the origin:
\begin{equation}
\label{eq:u-chart}
\nu=\Log_o(\mu)\in T_o\Hh^d\cong \R^d,
\qquad
\mu(\nu)=\Exp_o(\nu).
\end{equation}
Since $\Hh^d$ is Hadamard, the map $
\nu\mapsto\mu(\nu)
$
is a global diffeomorphism. Therefore the outer optimization may be carried out over the unconstrained Euclidean variable $\nu\in\R^d$.

In these coordinates, the shell-constrained objective becomes
\[
\nu\mapsto Q_n^{\shell}\{\mu(\nu)\},
\]
where the covariance profile is computed by applying Proposition~\ref{prop:spectral-clipping} to
\[
S_n\{\mu(\nu)\}
=
\frac1n\sum_{i=1}^n
\T_{\mu(\nu)}(X_i)\T_{\mu(\nu)}(X_i)^\top .
\]

\subsection{Optimization and numerical safeguards}

The Lorentz-model formulas for $\Exp_\mu$, $\Log_\mu$, and parallel transport are explicit \citep{nagano_2019_WrappedNormalDistribution}. Hence each evaluation of the profiled criterion consists of the following steps:
\begin{enumerate}[label=(\roman*)]
\item map the current Euclidean coordinate $\nu$ to $\mu(\nu)=\Exp_o(\nu)$;
\item compute tangent coordinates $\T_{\mu(\nu)}(X_i)$ for all observations;
\item form the empirical scatter matrix $S_n\{\mu(\nu)\}$;
\item compute the profiled covariance $\widetilde\Sigma_n\{\mu(\nu)\}$ by spectral clipping;
\item evaluate the profiled objective \eqref{eq:shell-profile-objective}.
\end{enumerate}

When the practical objective \eqref{eq:practical-objective} is used, its derivative formally satisfies
\[
D_\nu Q_n\{\mu(\nu)\}
=
\frac n2\,
\tr\!\left[
S_n\{\mu(\nu)\}^{-1}
D_\nu S_n\{\mu(\nu)\}
\right]
+
(d-1)\sum_{i=1}^n
D_\nu \psiwrap\{\T_{\mu(\nu)}(X_i)\},
\]
where
\[
D_\nu S_n\{\mu(\nu)\}
=
\frac1n\sum_{i=1}^n
D_\nu
\left[
\T_{\mu(\nu)}(X_i)\T_{\mu(\nu)}(X_i)^\top
\right].
\]
Hence, one may opt for standard algorithms including  quasi-Newton, trust-region, or Riemannian-gradient methods \citep{absil_2008_OptimizationAlgorithmsMatrix}. In our implementation, the optimization is performed in the global Euclidean coordinate $\nu$, with multiple initializations used in the more anisotropic scenarios.

Several numerical safeguards are important. First, the function
\[
r\mapsto \log\left(\frac{\sinh r}{r}\right)
\]
should not be evaluated naively at very small or very large $r$. For small $r$, we use the expansion
\[
\log\left(\frac{\sinh r}{r}\right)
=
\frac{r^2}{6}-\frac{r^4}{180}+O(r^6),
\qquad r\downarrow 0.
\]
For large $r$, we use the stable identity
\[
\log\left(\frac{\sinh r}{r}\right)
=
r-\log(2r)+\log(1-e^{-2r}).
\]
Second, log-determinants and quadratic forms are computed using eigendecompositions or Cholesky factors rather than explicit matrix inverses. Third, the shell profile is evaluated through eigenvalue clipping, which prevents singular covariance estimates in small samples or nearly degenerate configurations.

A natural initialization is the two-step intrinsic estimator to first compute a sample Fr\'echet mean $\hat\mu_F$ and then set
\[
\hat\Sigma_F
=
\frac1n\sum_{i=1}^n
\T_{\hat\mu_F}(X_i)\T_{\hat\mu_F}(X_i)^\top .
\]
The profile likelihood optimization is then initialized at $\hat\mu_F$, with optional random perturbations in the global coordinate $\nu$ for multi-start checks.

\section{Numerical study}
\label{sec:numerical}

We include one numerical study designed to assess the finite-sample calibration of the asymptotic theory. The purpose is not to compare a large collection of competing methods, but to examine whether the profile MLE behaves in accordance with the likelihood theory developed above. In particular, we compare the empirical scaled location risk with the efficient asymptotic target, examine Wald coverage for both the location coordinate and the full parameter, and record basic computational diagnostics for the profile likelihood optimization.

\subsection{Design}

Data are generated from the correctly specified anisotropic HWN model. For each replication,
\[
Z_i\sim N_d(0,\Sigma_0),
\qquad
X_i=\Exp_{\mu_0}\bigl(PT_{o\to\mu_0}Z_i\bigr),
\qquad i=1,\dots,n .
\]
The covariance matrix is a rotated anisotropic covariance with condition number \(10\). The location satisfies
\[
\rho(o,\mu_0)=3,
\]
so the experiment is not centered at the origin. We consider
\[
d\in\{2,5,10\},
\qquad
n\in\{100,250,500,1000\}.
\]
For each configuration we use \(500\) Monte Carlo replications. The Fisher information matrix and the efficient Schur-complement information are approximated by an independent Monte Carlo calculation with \(20{,}000\) draws. The covariance shell is fixed at
\[
0.03 I_d\preceq \Sigma\preceq 20 I_d .
\]

For each replication we compute the shell-constrained profile MLE. The primary location diagnostic is the scaled intrinsic risk
\[
n\,\rho(\hat\mu_n,\mu_0)^2 .
\]
Its Monte Carlo average is compared with the efficient asymptotic target
\[
\tr\bigl(\info_{\alpha\alpha\cdot\beta}^{-1}\bigr).
\]
We also report the empirical coverage of the nominal \(95\%\) Wald ellipsoid for the location coordinate, based on \(\info_{\alpha\alpha\cdot\beta}^{-1}\), and the empirical coverage of the nominal \(95\%\) Wald ellipsoid for the full local parameter \((\alpha,\beta)\), based on \(\info(\vartheta_0)^{-1}\). Finally, to assess covariance calibration of the location limit distribution, we report
\[
\frac{
\left\|
\widehat{\Cov}\{\sqrt n\,\Log_{\mu_0}(\hat\mu_n)\}
-
\info_{\alpha\alpha\cdot\beta}^{-1}
\right\|_F
}{
\left\|
\info_{\alpha\alpha\cdot\beta}^{-1}
\right\|_F
}.
\]
The shell activity column records the fraction of replications for which the profiled covariance estimate lies on the boundary of the covariance shell.

\subsection{Results}

Table~\ref{tab:numerical-calibration} reports the results. The scaled location risk is close to the efficient target across all dimensions and sample sizes. The ratio between empirical scaled risk and the target ranges from \(0.967\) to \(1.104\), which is consistent with the prediction
\[
n\,E\{\rho(\hat\mu_n,\mu_0)^2\}
\approx
\tr\bigl(\info_{\alpha\alpha\cdot\beta}^{-1}\bigr).
\]
The location Wald coverage is close to the nominal level, ranging from \(0.922\) to \(0.962\). The full-parameter coverage is more sensitive in small samples, especially in dimension \(10\), where the nuisance covariance coordinate is high-dimensional; nevertheless it improves as \(n\) increases and is close to nominal in the larger-sample cases.

The relative covariance calibration error is small in dimension \(2\) once \(n\ge 250\), and remains moderate in dimensions \(5\) and \(10\). Since the information matrix is itself estimated by Monte Carlo, this quantity should be interpreted as a combined finite-sample and numerical calibration diagnostic rather than as an exact discrepancy. The covariance shell is inactive in all reported configurations. This supports the theoretical conclusion in Proposition~\ref{prop:inactive-shell}: the shell is useful for finite-sample well-posedness, but does not bind when the true covariance lies well inside it.

\begin{table}[t]
\centering
\caption{Finite-sample calibration of the profile MLE under the anisotropic HWN model. The covariance has condition number \(10\), the location satisfies \(\rho(o,\mu_0)=3\), and the covariance orientation is rotated. The column \(n\hat R_\mu\) is the Monte Carlo average of \(n\rho(\hat\mu_n,\mu_0)^2\). The target is \(\tr(\info_{\alpha\alpha\cdot\beta}^{-1})\). Location and full coverage refer to nominal \(95\%\) Wald ellipsoids.}
\label{tab:numerical-calibration}
\small
\begin{tabular}{rrrrrrrrr}
\toprule
\(d\) & \(n\) & \(n\hat R_\mu\) & Target & Ratio
& Loc. cov. & Full cov. & Rel. cov. err. & Shell active \\
\midrule
2  & 100  & 1.446 & 1.310 & 1.104 & 0.940 & 0.928 & 0.112 & 0.000 \\
2  & 250  & 1.296 & 1.310 & 0.990 & 0.946 & 0.948 & 0.012 & 0.000 \\
2  & 500  & 1.312 & 1.310 & 1.002 & 0.948 & 0.954 & 0.006 & 0.000 \\
2  & 1000 & 1.335 & 1.310 & 1.019 & 0.950 & 0.944 & 0.027 & 0.000 \\
\midrule
5  & 100  & 1.332 & 1.222 & 1.089 & 0.922 & 0.884 & 0.126 & 0.000 \\
5  & 250  & 1.206 & 1.222 & 0.986 & 0.950 & 0.942 & 0.061 & 0.000 \\
5  & 500  & 1.204 & 1.222 & 0.985 & 0.954 & 0.940 & 0.063 & 0.000 \\
5  & 1000 & 1.288 & 1.222 & 1.054 & 0.940 & 0.940 & 0.102 & 0.000 \\
\midrule
10 & 100  & 1.204 & 1.156 & 1.042 & 0.952 & 0.854 & 0.117 & 0.000 \\
10 & 250  & 1.166 & 1.156 & 1.009 & 0.942 & 0.908 & 0.124 & 0.000 \\
10 & 500  & 1.118 & 1.156 & 0.967 & 0.958 & 0.948 & 0.117 & 0.000 \\
10 & 1000 & 1.129 & 1.156 & 0.977 & 0.962 & 0.954 & 0.104 & 0.000 \\
\bottomrule
\end{tabular}
\end{table}

Figure~\ref{fig:numerical-risk-target} visualizes the same calibration by plotting the scaled location risk against the efficient target. The agreement is already close at moderate sample sizes and remains stable across the larger sample sizes. Small nonmonotonic fluctuations are expected because the experiment uses finite Monte Carlo replication and a Monte Carlo approximation to the Fisher information.

\begin{figure}[t]
\centering
\includegraphics[width=0.72\textwidth]{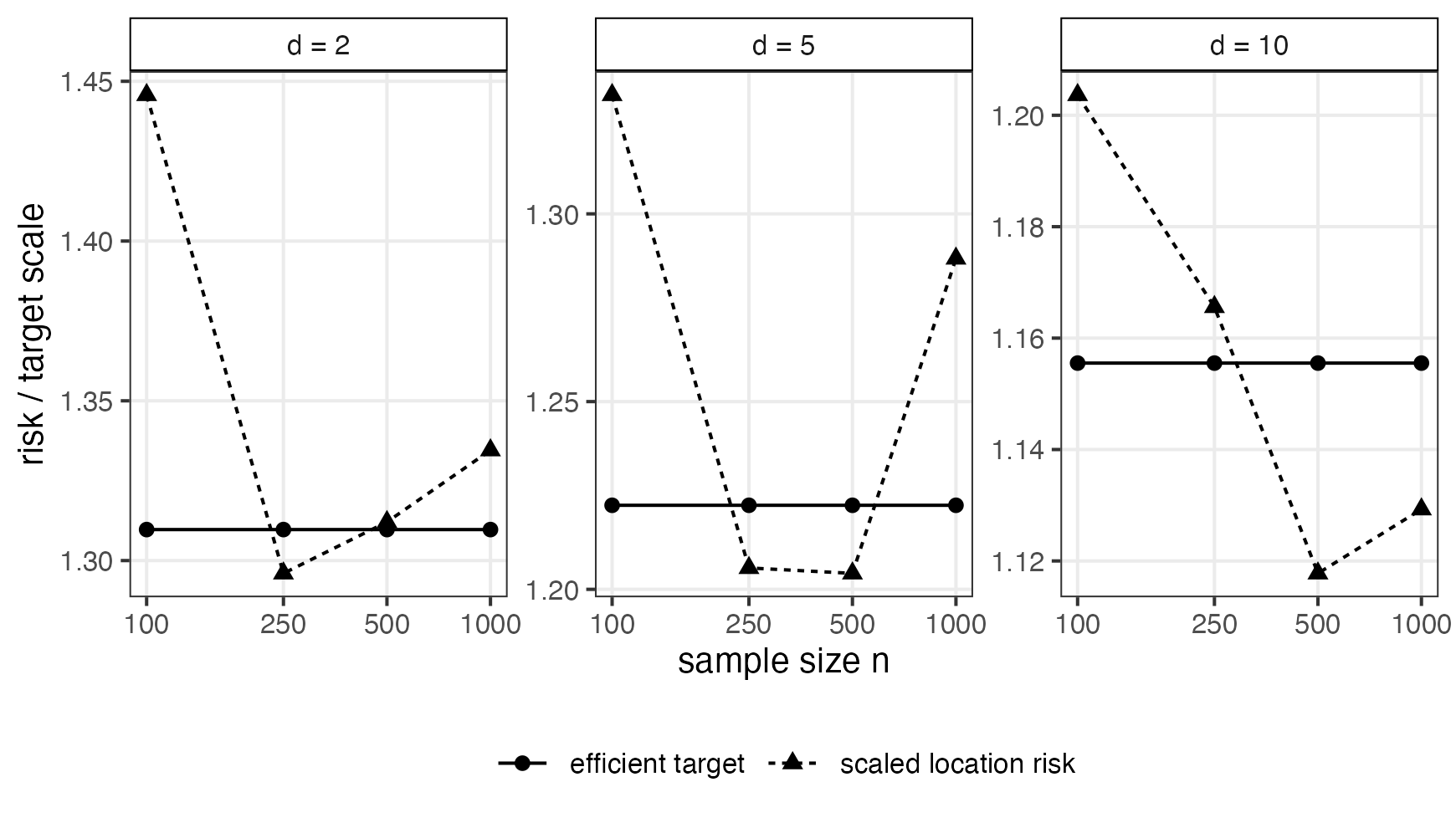}
\caption{Scaled location risk \(n\,E\{\rho(\hat\mu_n,\mu_0)^2\}\) compared with the efficient asymptotic target \(\tr(\info_{\alpha\alpha\cdot\beta}^{-1})\).}
\label{fig:numerical-risk-target}
\end{figure}

Figure~\ref{fig:numerical-coverage} reports the empirical coverage of nominal
\(95\%\) Wald regions. The location coverage remains close to nominal across the
reported configurations. The full-parameter coverage is more sensitive in small
samples, especially in dimension \(10\), where the nuisance covariance coordinate
is high-dimensional, but it improves as the sample size increases.

\begin{figure}[t]
\centering
\includegraphics[width=0.72\textwidth]{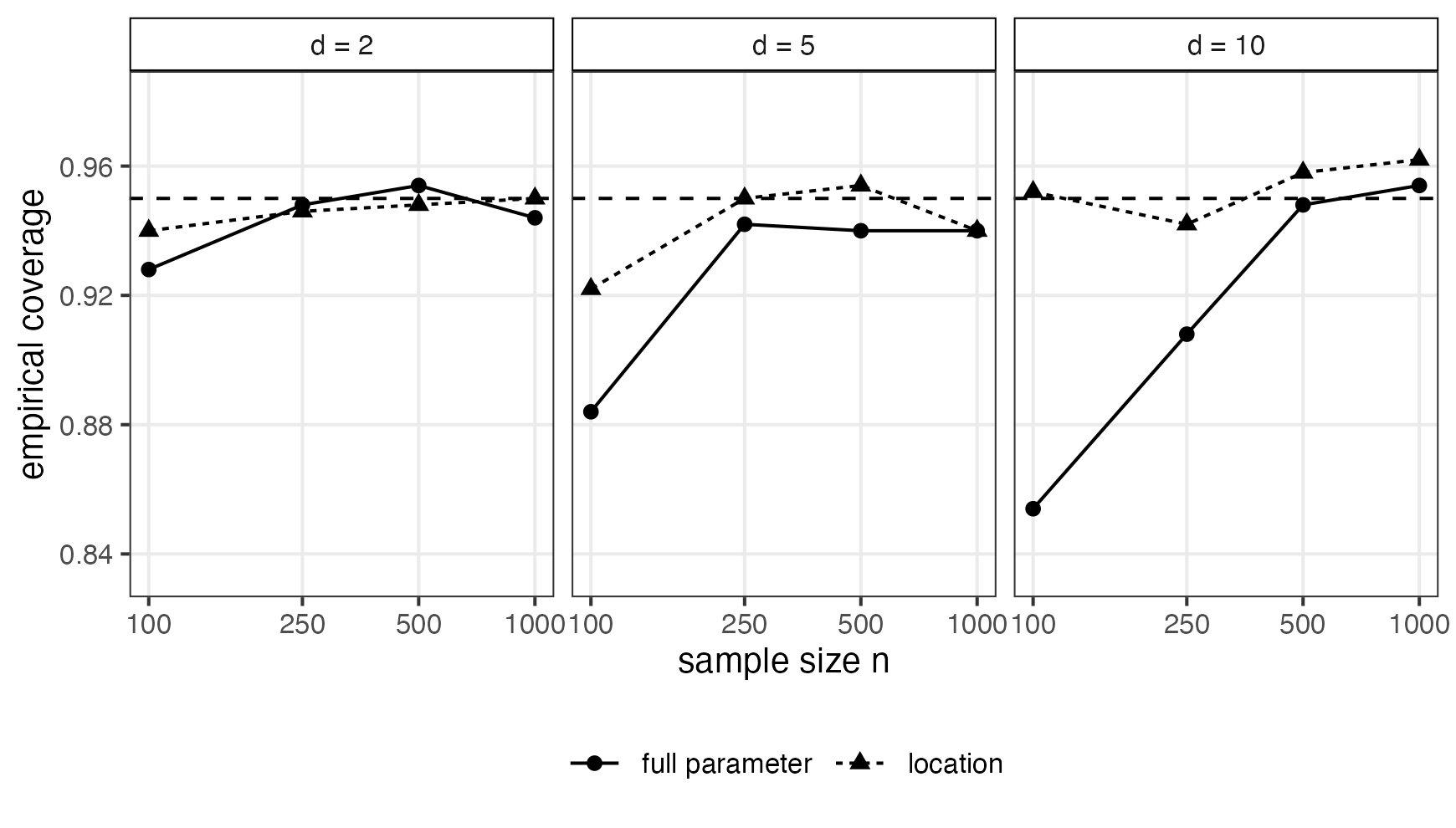}
\caption{Empirical coverage of nominal \(95\%\) Wald regions. The location
coverage uses the Schur-complement covariance
\(\info_{\alpha\alpha\cdot\beta}^{-1}\), while the full-parameter coverage uses
the inverse Fisher information \(\info(\vartheta_0)^{-1}\).}
\label{fig:numerical-coverage}
\end{figure}

The computation was also stable. Median runtimes ranged from approximately \(0.10\) seconds in dimension \(2\), \(n=100\), to approximately \(1.46\) seconds in dimension \(10\), \(n=1000\). Median iteration counts were roughly \(32\), \(61\), and \(108\) in dimensions \(2\), \(5\), and \(10\), respectively. These values are consistent with the fact that the outer optimization is only over the \(d\)-dimensional location coordinate, while the covariance nuisance parameter is profiled out explicitly by spectral clipping.

Overall, the numerical results support the second-order theory. The empirical scaled risks track the efficient Schur-complement target, the location Wald coverage is close to nominal, the full-parameter coverage improves with sample size, and the covariance shell is inactive in all reported configurations. These findings are consistent with the role of the shell as a finite-sample regularization device rather than an asymptotic constraint.

\section{Conclusion}
\label{sec:conclusion}

This paper develops a likelihood-based inferential theory for the anisotropic hyperbolic wrapped normal distribution on standard hyperbolic space. The model combines a manifold-valued location parameter with a full tangent-space covariance matrix, and therefore requires both geometric and multivariate likelihood arguments. The estimator studied here is the profile maximum likelihood estimator obtained by optimizing over the location after profiling out the covariance parameter.

A technical difficulty in the anisotropic model is that the unrestricted covariance profile may fail to attain a finite maximum when the empirical tangent covariance is singular. We address this by introducing a compact covariance shell whose eigenvalue bounds keep the likelihood well posed. This device guarantees finite-sample existence of the constrained estimator, while remaining asymptotically inactive when the true covariance lies in the interior of the shell. Thus the constrained formulation provides a rigorous foundation for the practical profile likelihood computation.

The main theoretical results establish identifiability, existence of the shell-constrained joint and profile maximum likelihood estimators, strong consistency, joint asymptotic normality, and efficient profile asymptotic normality for the location parameter. The covariance parameter is treated as nuisance for location inference, and the efficient asymptotic covariance is given by the inverse Schur complement of the nuisance information block. We also prove local asymptotic normality of the anisotropic HWN experiment and derive the corresponding Hajek-Le Cam local asymptotic minimax lower bound under squared geodesic loss. The profile estimator attains the bound for truncated squared geodesic loss, and for ordinary squared geodesic loss under a stated local uniform-integrability condition.

The computational form of the estimator is explicit. For each candidate location, the covariance profile is obtained by spectral clipping of the empirical tangent covariance matrix. The remaining optimization is over the location only and can be carried out in global normal coordinates at the origin. The numerical study supports the second-order theory: the empirical scaled location risk tracks the Schur-complement target, Wald coverage is close to nominal, and the covariance shell is inactive in the reported configurations.

Several extensions remain open. One natural direction is to move beyond simply connected hyperbolic space to more general negatively curved manifolds, where cut-locus and chart issues may arise. Another is to study robustness under misspecification, for example when the tangent distribution is heavy-tailed or when the data arise from mixtures of wrapped normal components. A third direction is to develop high-dimensional theory in which the covariance dimension grows with the sample size.




\bibliographystyle{dcu}
\bibliography{references}

\appendix
\section{Proofs}

\noindent \textbf{Proof of Proposition~\ref{prop:density}}. Let $Z\sim N_d(0,\Sigma)$ and define $F_\mu(v)=\Exp_\mu(PT_{o\to\mu}v)$. By construction $X=F_\mu(Z)$. Parallel transport is an isometry, hence has Jacobian determinant $1$, while the Jacobian determinant of $\Exp_\mu$ at a tangent vector of norm $r$ is $(\sinh r/r)^{d-1}$ in the Lorentz model \citet[Sec.~3.2]{nagano_2019_WrappedNormalDistribution}. Therefore
\[
\bigl|\det D F_\mu(v)\bigr|
=
\Bigl(\frac{\sinh \norm{v}}{\norm{v}}\Bigr)^{d-1}.
\]
Applying the change-of-variables formula with $v=\T_\mu(x)$ yields \eqref{eq:density}. Continuity at $x=\mu$ follows from $\sinh r/r\to 1$ as $r\to 0$.

\medskip
\noindent \textbf{Proof of Proposition~\ref{prop:conditionalSigma}}. Existence on $\shell$ is immediate from continuity and compactness. Assume now that $S\in\SPD(d)$. For any $\Sigma\in\SPD(d)$, let $A=S^{1/2}\Sigma^{-1}S^{1/2}$. Then
\begin{align*}
f_S(\Sigma)-f_S(S)
&= \log\det\Sigma-\log\det S+\tr(S\Sigma^{-1})-d\\
&= -\log\det A+\tr(A)-d\\
&= \sum_{j=1}^d \bigl(\lambda_j(A)-\log\lambda_j(A)-1\bigr)\ge 0,
\end{align*}
by the scalar inequality $u-\log u-1\ge 0$ for $u>0$. Equality holds if and only if every eigenvalue of $A$ equals $1$, that is, if and only if $A=\Id$ or equivalently $\Sigma=S$. Hence $S$ is the unique minimizer on $\SPD(d)$. If $S\in \mathrm{int}(\shell)$, the same uniqueness shows that the shell-constrained minimizer is $S$.

\medskip 
\noindent \textbf{Proof of Lemma~\ref{lem:moments}}. Write $X=\Exp_{\mu}(PT_{o\to \mu}Z)$ with $Z\sim N_d(0,\Sigma)$. Because geodesic distance equals tangent norm under the exponential map,  $\rho(\mu,X)=\norm{Z}$. Hence, by the triangle inequality,
\[
\rho(o,X)\le \rho(o,\mu)+\rho(\mu,X)=\rho(o,\mu)+\norm{Z}.
\]
For a fixed parameter this proves part (a), since all moments of the Euclidean Gaussian norm are finite. For part (b), let $\lambda_{\max}$ be the largest eigenvalue of $\Sigma$ and choose any $t\in\bigl(0,(2\lambda_{\max})^{-1}\bigr)$. Then for any $a>0$ and $r\ge 0$,
\[
ar\le tr^2+\frac{a^2}{4t},
\]
so
\[
\E e^{a\norm{Z}}\le e^{a^2/(4t)}\E e^{t\norm{Z}^2}<\infty.
\]
The claim follows by multiplying by $e^{a\rho(o,\mu)}$. The same argument is uniform on $K_\mu\times K_\Sigma$: replace $\rho(o,\mu)$ by $\sup_{\mu\in K_\mu}\rho(o,\mu)$ and choose $t$ smaller than one half of the reciprocal of the uniform upper eigenvalue bound on $K_\Sigma$.

\medskip
\noindent \textbf{Proof of Proposition~\ref{prop:envelope}}. 
We divide the proof into four steps.

\smallskip
\noindent\emph{Step 1: smoothness of the basic coordinate map.}
Consider the global map
\[
F:\Hh^d\times\R^d\to \Hh^d\times \Hh^d,
\qquad
F(\mu,v)=\bigl(\mu,\Exp_\mu(PT_{o\to \mu}v)\bigr).
\]
Because $PT_{o\to\mu}$ and $\Exp_\mu$ are smooth in $(\mu,v)$ and $\Exp_\mu$ is a global diffeomorphism on each tangent space, $F$ is a smooth global diffeomorphism. Its inverse has the form
\[
F^{-1}(\mu,x)=\bigl(\mu,\T_\mu(x)\bigr).
\]
Hence $(\mu,x)\mapsto \T_\mu(x)$ is smooth on all of $\Hh^d\times\Hh^d$.

\smallskip
\noindent\emph{Step 2: polynomial growth of derivatives of $\T_\mu(x)$ in $\mu$.}
Fix a compact $K_\mu\subset\Hh^d$. We claim that for $j=0,1,2,3$,
\begin{equation}
\label{eq:T-growth}
\sup_{\mu\in K_\mu}\bigl\|\partial_\alpha^j \T_\mu(x)\bigr\|
\le C_j'(1+\rho(o,x)^{q_j'}),
\qquad x\in\Hh^d.
\end{equation}
Indeed, because $(\mu,x)\mapsto \T_\mu(x)$ is smooth, its derivatives are bounded on the compact set
\[
\{(\mu,x):\mu\in K_\mu,\ \rho(\mu,x)\le 1\},
\]
which is compact because $K_\mu$ is compact and $\Hh^d$ is proper.

It remains to control the region $\rho(\mu,x)\ge 1$. Write $r=\rho(\mu,x)$ and recall the explicit formula
\[
\Log_\mu(x)=q(r)\bigl(x-\cosh(r)\mu\bigr),
\qquad
q(r):=\frac{r}{\sinh r}.
\]
On $[1,\infty)$ the functions $q,q',q'',q'''$ are bounded by constants times $(1+r)e^{-r}$. Moreover,
\[
\norm{x-\cosh(r)\mu}_{T_\mu\Hh^d}=\sinh r.
\]
Differentiating the display above up to order three with respect to $\mu$ therefore produces finite sums of terms built out of:
(i) derivatives of $q(r)$, each bounded by $(1+r)e^{-r}$ up to a constant;
(ii) derivatives of $\cosh r$ or $\sinh r$;
(iii) derivatives of the distance function $r=\rho(\mu,x)$ with respect to $\mu$;
(iv) factors $x-\cosh(r)\mu$ or $\mu$.
For $r\ge 1$, the first derivative of $r$ has norm $1$, the Hessian of $r$ is
\[
\nabla^2 r = \coth(r)\bigl(g-dr\otimes dr\bigr),
\]
and therefore has operator norm bounded by $\coth(1)$, while the third derivative is uniformly bounded on $\{r\ge 1\}$ because differentiating the Hessian formula on the constant-curvature manifold $\Hh^d$ only introduces smooth functions of $r$ with bounded derivatives on $[1,\infty)$. Hence every term arising from differentiation is bounded by a constant times a polynomial in $r$. Since parallel transport $PT_{o\to\mu}^{-1}$ and its derivatives are bounded on compact $K_\mu$, \eqref{eq:T-growth} follows.

Finally, because
\[
\rho(\mu,x)\le \rho(\mu,o)+\rho(o,x)\le C_{K_\mu}+\rho(o,x),
\]
the right-hand side of \eqref{eq:T-growth} can be expressed as a polynomial in $\rho(o,x)$.

\smallskip
\noindent\emph{Step 3: growth of the Jacobian term.}
The map $\psiwrap:\R^d\to\R$ defined in \eqref{eq:phi-def} is smooth and radial. For large $\norm{v}$,
\[
\psiwrap(v)=\norm{v}-\log 2-\log \norm{v}+O(e^{-2\norm{v}}),
\]
hence its gradient is bounded and its higher derivatives are $O(\norm{v}^{-1})$ and $O(\norm{v}^{-2})$, respectively. Near $0$, smoothness gives boundedness of all derivatives on bounded sets. Therefore, for $j=0,1,2,3$,
\[
\bigl\|\partial^j \psiwrap(v)\bigr\|
\le C_j''(1+\norm{v}).
\]
Combining this bound with \eqref{eq:T-growth} and the multivariate chain rule yields a polynomial envelope for derivatives of $\mu\mapsto \psiwrap(\T_\mu(x))$ up to order three.

\smallskip
\noindent\emph{Step 4: the covariance chart.}
On a compact set $B\subset\R^m$, the matrix-valued maps
\[
\beta\mapsto \Sigma(\beta),\qquad
\beta\mapsto \Sigma(\beta)^{-1},\qquad
\beta\mapsto \log\det \Sigma(\beta)
\]
and their derivatives up to order three are bounded, because the matrix exponential and inversion are smooth on finite-dimensional spaces and $B$ is compact \citep{bhatia_2009_PositiveDefiniteMatrices}. The quadratic term
\[
(\mu,\beta,x)\mapsto \T_\mu(x)^\top \Sigma(\beta)^{-1}\T_\mu(x)
\]
is therefore a smooth composition whose derivatives up to order three are bounded by a polynomial in $\norm{\T_\mu(x)}$ and in the derivatives of $\T_\mu(x)$, hence by a polynomial in $\rho(o,x)$ thanks to \eqref{eq:T-growth}. The same holds for the full log-density \eqref{eq:single-loglik}. This proves \eqref{eq:envelope}.

The stated moment consequences follow from Lemma~\ref{lem:moments}, since every polynomial in $\rho(o,X)$ has finite moments of all orders under $P_{\vartheta_0}$.

\medskip
\noindent \textbf{Proof of Lemma~\ref{lem:scoremean}}. By Proposition~\ref{prop:envelope},  derivatives of $p_\vartheta(x)$ with respect to $\vartheta$ are dominated on compact neighborhoods by integrable envelopes. Therefore differentiation can be passed under the integral sign:
\[
0=\partial_\vartheta \int p_\vartheta(x)\,d\mathrm{vol}(x)=\int \partial_\vartheta p_\vartheta(x)\,d\mathrm{vol}(x)
=\int \score_\vartheta(x)p_\vartheta(x)\,d\mathrm{vol}(x).
\]
This yields $\E_\vartheta[\score_\vartheta(X)]=0$. Differentiating once more and using the identity
\[
\partial_\vartheta^2 p_\vartheta
=
\bigl(\partial_\vartheta^2 m_\vartheta+\score_\vartheta\score_\vartheta^\top\bigr)p_\vartheta
\]
gives the information identity.

\medskip
\noindent \textbf{Proof of Lemma~\ref{lem:ulln}}. By Proposition~\ref{prop:envelope}, the map $\vartheta\mapsto m_\vartheta(x)$ is continuously differentiable on $K$ and
\[
L_K(x):=\sup_{\vartheta\in K}\norm{\partial_\vartheta m_\vartheta(x)}
\]
has finite expectation under $P_{\vartheta_0}$. Hence the mean-value theorem gives
\[
|m_{\vartheta_1}(x)-m_{\vartheta_2}(x)|\le L_K(x)\norm{\vartheta_1-\vartheta_2},
\qquad \vartheta_1,\vartheta_2\in K.
\]
Choose an $\varepsilon$-net $\{\vartheta^{(1)},\dots,\vartheta^{(N)}\}$ of $K$. For any $\vartheta\in K$ select $k(\vartheta)$ with $\norm{\vartheta-\vartheta^{(k(\vartheta))}}\le\varepsilon$. Then
\begin{align*}
|M_n(\vartheta)-M(\vartheta)|
&\le |M_n(\vartheta^{(k)})-M(\vartheta^{(k)})|
+ \varepsilon\Bigl(\frac1n\sum_{i=1}^n L_K(X_i)+\E L_K(X)\Bigr).
\end{align*}
Taking suprema over $K$ and then limsup, the strong law of large numbers gives
\[
\limsup_{n\to\infty}\sup_{\vartheta\in K}|M_n(\vartheta)-M(\vartheta)|
\le 2\varepsilon \E L_K(X)
\qquad \text{a.s.}
\]
Because $\varepsilon>0$ is arbitrary, the conclusion follows. For first derivatives, the same argument uses the second-derivative envelope and for second derivatives, it uses the third-derivative envelope. We do neither need nor assert a compact-uniform law for third derivatives.

\medskip 
\noindent \textbf{Proof of Theorem~\ref{thm:existence}}. We first prove coercivity in the location parameter. For any $(\mu,\Sigma)\in\Theta$, using $\Sigma^{-1}\succeq \lambda_+^{-1}\Id$ and $\det \Sigma\ge \lambda_-^d$, \eqref{eq:sample-loglik2} yields
\begin{equation}
\label{eq:coercive-upper}
\ell_n(\mu,\Sigma)
\le
-\frac{nd}{2}\log\lambda_-
-\frac{1}{2\lambda_+}\sum_{i=1}^n r_\mu(X_i)^2
-(d-1)\sum_{i=1}^n \psiwrap(\T_\mu(X_i)).
\end{equation}
In particular, since $\psiwrap\ge 0$,
\[
\ell_n(\mu,\Sigma)
\le C_n - \frac{1}{2\lambda_+}\sum_{i=1}^n r_\mu(X_i)^2
\]
for a finite sample-dependent constant $C_n$.

Let $D(\mu):=\rho(o,\mu)$ and set $\bar d_n:=n^{-1}\sum_{i=1}^n \rho(o,X_i)$. By the triangle inequality,
\[
r_\mu(X_i)=\rho(\mu,X_i)\ge D(\mu)-\rho(o,X_i),
\]
hence by Jensen,
\[
\frac1n\sum_{i=1}^n r_\mu(X_i)^2
\ge
\Bigl(\frac1n\sum_{i=1}^n r_\mu(X_i)\Bigr)^2
\ge
\bigl(D(\mu)-\bar d_n\bigr)_+^2,
\qquad a_+:=\max\{a,0\}.
\]
Substituting into \eqref{eq:coercive-upper} gives the uniform bound
\begin{equation}
\label{eq:uniform-coercive}
\sup_{\Sigma\in\shell}\ell_n(\mu,\Sigma)
\le
C_n - \frac{n}{2\lambda_+}\bigl(D(\mu)-\bar d_n\bigr)_+^2.
\end{equation}
Therefore
\[
\sup_{\Sigma\in\shell}\ell_n(\mu,\Sigma)\to -\infty
\qquad\text{as }D(\mu)\to\infty.
\]
So every maximizer of the likelihood must lie in a closed geodesic ball $
B_R(o)=\{\mu\in\Hh^d:\rho(o,\mu)\le R\}
$ for $R$ large enough.

Now $B_R(o)$ is compact because $\Hh^d$ is proper \citep{petersen_2006_RiemannianGeometry}  and $\shell$ is compact by definition. The log-likelihood is continuous by Proposition~\ref{prop:envelope}. Hence the restriction of $\ell_n$ to $B_R(o)\times\shell$ attains its maximum. By construction, the same point maximizes $\ell_n$ over all of $\Theta$. Existence of the profile maximizer follows immediately.

\medskip 
\noindent \textbf{Proof of Theorem~\ref{thm:identifiability}}. Fix $(\mu,\Sigma)$. By \eqref{eq:density},
\[
p_{\mu,\Sigma}(x)
=
(2\pi)^{-d/2}|\Sigma|^{-1/2}
\exp\;\Bigl(-\tfrac12\T_\mu(x)^\top\Sigma^{-1}\T_\mu(x)\Bigr)
\Bigl(\frac{r_\mu(x)}{\sinh r_\mu(x)}\Bigr)^{d-1}.
\]
At $x=\mu$ we have $\T_\mu(\mu)=0$ and $r_\mu(\mu)=0$, so
\[
p_{\mu,\Sigma}(\mu)=(2\pi)^{-d/2}|\Sigma|^{-1/2}.
\]
If $x\neq \mu$, then $r_\mu(x)>0$, hence
\[
\exp\;\Bigl(-\tfrac12\T_\mu(x)^\top\Sigma^{-1}\T_\mu(x)\Bigr)<1
\qquad\text{and}\qquad
\frac{r_\mu(x)}{\sinh r_\mu(x)}<1.
\]
Therefore $p_{\mu,\Sigma}(x)<p_{\mu,\Sigma}(\mu)$ for every $x\neq \mu$, so $\mu$ is the unique mode of the density.

If two parameter pairs $(\mu,\Sigma)$ and $(\mu',\Sigma')$ define the same distribution, their densities coincide volume-almost everywhere. Since both densities are continuous, equality almost everywhere implies equality everywhere. Otherwise, continuity would give a nonempty open set on which they differ and every nonempty open set in $\Hh^d$ has positive Riemannian volume. The two densities therefore have the same unique mode, and hence $\mu=\mu'$. Once the location is fixed, the random vector $\T_\mu(X)$ is exactly Gaussian $N(0,\Sigma)$ by construction. Equality of the laws therefore forces $\Sigma=\Sigma'$, which completes the proof.

\medskip
\noindent \textbf{Proof of Theorem~\ref{thm:consistency}}.  Write $\theta=(\mu,\Sigma)$ and define the population criterion
\[
M(\theta):=\E_{\theta_0}[m_\theta(X)].
\]
We divide the proof into four steps.

\smallskip
\noindent\emph{Step 1: Kullback--Leibler identification.}
By Proposition~\ref{prop:envelope}, $m_\theta(X)$ is integrable under $P_{\theta_0}$ for every $\theta$. Hence
\[
M(\theta)-M(\theta_0)
=
\E_{\theta_0}\log\frac{p_\theta(X)}{p_{\theta_0}(X)}
=
-\KL(P_{\theta_0}\,\|\,P_\theta)\le 0.
\]
By Theorem~\ref{thm:identifiability}, equality holds if and only if $\theta=\theta_0$. Thus $M$ has the unique maximizer $\theta_0$.

\smallskip
\noindent\emph{Step 2: localization on a deterministic compact set.}
By the strong law and Lemma~\ref{lem:moments}, $\bar d_n:=n^{-1}\sum_{i=1}^n \rho(o,X_i)\to \E\rho(o,X)<\infty$ almost surely. Fix a finite constant $C>\E\rho(o,X)+1$. On the almost sure event on which $\bar d_n\le C$ eventually, the coercive bound \eqref{eq:uniform-coercive} implies that eventually
\[
\sup_{\Sigma\in\shell}\ell_n(\mu,\Sigma)
\le
C_n-\frac{n}{2\lambda_+}(D(\mu)-C)_+^2.
\]
On the other hand, by the ordinary strong law,
\[
\frac1n\ell_n(\mu_0,\Sigma_0)\to M(\theta_0)
\qquad\text{a.s.}
\]
Therefore, almost surely, there exists $R<\infty$ such that for all sufficiently large $n$,
\[
\sup_{\rho(o,\mu)>R,\ \Sigma\in\shell}\frac1n\ell_n(\mu,\Sigma)
<
\frac1n\ell_n(\mu_0,\Sigma_0)-1.
\]
Because $\hat\theta_n$ is a maximizer, it follows that $\hat\theta_n$ eventually belongs almost surely to the compact set
\[
K_R:=\{(\mu,\Sigma):\rho(o,\mu)\le R,\ \Sigma\in\shell\}.
\]

\smallskip
\noindent\emph{Step 3: uniform convergence on $K_R$.}
Choose the Euclidean chart $(\alpha,\beta)$ and let $K_R^\flat$ denote the image of $K_R$ in $\R^p$. The set $K_R^\flat$ is compact. By Lemma~\ref{lem:ulln}, 
\[
\sup_{\vartheta\in K_R^\flat}|M_n(\vartheta)-M(\vartheta)|\to 0
\qquad\text{a.s.}
\]
where $M_n(\vartheta)=n^{-1}\ell_n(\vartheta)$.

\smallskip
\noindent\emph{Step 4: the argmax conclusion.}
By Step 2, $\hat\vartheta_n$ lies in $K_R^\flat$ eventually. By Step 3, $M_n$ converges uniformly to $M$ on that compact set. By Step 1, $M$ has a unique maximizer at $\vartheta_0$. The standard compact argmax theorem therefore yields
\[
\hat\vartheta_n\to \vartheta_0
\qquad\text{a.s.}
\]
which is the desired conclusion.

\medskip
\noindent \textbf{Proof of Proposition~\ref{prop:inactive-shell}}. Let $S(\mu):=\E_{\theta_0}\bigl[\T_\mu(X)\T_\mu(X)^\top\bigr]$. By Proposition~\ref{prop:envelope}, the matrix-valued function $\mu\mapsto \T_\mu(x)\T_\mu(x)^\top$ is continuous and dominated on compact sets by an integrable envelope. Hence $S(\mu)$ is continuous, and the argument of Lemma~\ref{lem:ulln} applied entrywise gives
\[
\sup_{\mu\in K_\mu}\norm{S_n(\mu)-S(\mu)}\to 0
\qquad\text{a.s.}
\]
for every compact $K_\mu\subset\Hh^d$.

By Theorem~\ref{thm:consistency}, $\hat\mu_n\to\mu_0$ almost surely, so eventually $\hat\mu_n$ lies in some compact $K_\mu$. Therefore
\[
\norm{S_n(\hat\mu_n)-S(\mu_0)}
\le
\sup_{\mu\in K_\mu}\norm{S_n(\mu)-S(\mu)}
+\norm{S(\hat\mu_n)-S(\mu_0)}
\to 0
\qquad\text{a.s.}
\]
But under the true model, $\T_{\mu_0}(X)=Z\sim N(0,\Sigma_0)$, hence
\[
S(\mu_0)=\E[ZZ^\top]=\Sigma_0.
\]
Thus $S_n(\hat\mu_n)\to \Sigma_0$ almost surely. Since $\Sigma_0\in \mathrm{int}(\shell)$, the event $S_n(\hat\mu_n)\in \mathrm{int}(\shell)$ occurs eventually almost surely. The last claim then follows from Proposition~\ref{prop:conditionalSigma}.

\medskip
\noindent \textbf{Proof of Theorem~\ref{thm:joint-asym-normal}}. Since $\Sigma_0\in \mathrm{int}(\shell)$ and $\hat\Sigma_n\to \Sigma_0$ almost surely by Theorem~\ref{thm:consistency}, the estimator lies in the interior of the shell with probability tending to one. On that event the first-order optimality equation holds:
\[
0=\partial_\vartheta M_n(\hat\vartheta_n)
=
\frac1n\sum_{i=1}^n \score_{\hat\vartheta_n}(X_i).
\]
Use the integral form of the multivariate Taylor expansion. On the event above,
\[
0
=
\frac1n\sum_{i=1}^n \score_{\vartheta_0}(X_i)
+
H_n(\hat\vartheta_n)(\hat\vartheta_n-\vartheta_0),
\]
where
\[
H_n(\hat\vartheta_n)
:=
\int_0^1
\frac1n\sum_{i=1}^n
\dot\score_{\vartheta_0+t(\hat\vartheta_n-\vartheta_0)}(X_i)\,dt .
\]
This integral expansion is used instead of a scalar mean-value form, which is not valid for vector-valued scores in general. Multiplying by $\sqrt n$ and rearranging gives
\begin{equation}
\label{eq:taylor-rootn}
\sqrt n(\hat\vartheta_n-\vartheta_0)
=
-
H_n(\hat\vartheta_n)^{-1}
\frac1{\sqrt n}\sum_{i=1}^n \score_{\vartheta_0}(X_i).
\end{equation}
Because $\hat\vartheta_n\to \vartheta_0$ in probability, the entire line segment
$\{\vartheta_0+t(\hat\vartheta_n-\vartheta_0):0\le t\le 1\}$ is contained with probability tending to one in any fixed compact neighborhood $K$ of $\vartheta_0$. By Lemma~\ref{lem:ulln},
\[
\sup_{\vartheta\in K}\Bigl\|
\frac1n\sum_{i=1}^n \dot\score_\vartheta(X_i)-\E_{\vartheta_0}\dot\score_\vartheta(X)
\Bigr\|\to 0
\qquad\text{a.s.}
\]
The map $\vartheta\mapsto \E_{\vartheta_0}\dot\score_\vartheta(X)$ is continuous by dominated convergence and Proposition~\ref{prop:envelope}. Hence
\[
H_n(\hat\vartheta_n)
\to
\E_{\vartheta_0}\dot\score_{\vartheta_0}(X)
=
-\info(\vartheta_0)
\qquad\text{in probability},
\]
where the last identity is Lemma~\ref{lem:scoremean}. Since $\info(\vartheta_0)$ is nonsingular by Assumption~\ref{ass:interior-info}, the inverse in \eqref{eq:taylor-rootn} is well-defined with probability tending to one and converges to $-\info(\vartheta_0)^{-1}$ in probability.

Finally, the multivariate central limit theorem applies to the score because Proposition~\ref{prop:envelope} supplies moments of order $2+\delta$:
\[
\frac1{\sqrt n}\sum_{i=1}^n \score_{\vartheta_0}(X_i)\Rightarrow N\bigl(0,\info(\vartheta_0)\bigr).
\]
Substituting into \eqref{eq:taylor-rootn} and using Slutsky's theorem yields \eqref{eq:joint-linear} and \eqref{eq:joint-normal}.

\medskip
\noindent \textbf{Proof of Corollary~\ref{cor:profile-location}}. The upper-left block of $\info(\vartheta_0)^{-1}$ is the inverse Schur complement $\info_{\alpha\alpha\cdot\beta}^{-1}$. Equivalently, the efficient score for $\alpha$ is the residual obtained by projecting $\score_\alpha$ onto the linear span of $\score_\beta$, namely \eqref{eq:efficient-score}. Extracting the $\alpha$-coordinates from \eqref{eq:joint-linear} yields \eqref{eq:profile-linear}. The covariance of the sum in \eqref{eq:profile-linear} is $\info_{\alpha\alpha\cdot\beta}$, so the central limit theorem gives \eqref{eq:profile-normal}.

\medskip 
\noindent \textbf{Proof of Theorem~\ref{thm:lan}}. Fix a compact set $C\subset\R^p$. For each $i$ and $h\in C$, the third-order Taylor expansion of $m_{\vartheta_{n,h}}(X_i)$ around $\vartheta_0$ yields
\begin{align*}
m_{\vartheta_{n,h}}(X_i)-m_{\vartheta_0}(X_i)
&=
\frac{1}{\sqrt n}h^\top \score_{\vartheta_0}(X_i)
+\frac{1}{2n} h^\top \partial_\vartheta^2 m_{\vartheta_0}(X_i) h
+R_{i,n}(h),
\end{align*}
where
\[
|R_{i,n}(h)|
\le
\frac{\norm{h}^3}{6n^{3/2}}
\sup_{\vartheta\in U_C}
\bigl\|\partial_\vartheta^3 m_\vartheta(X_i)\bigr\|
\]
for some compact neighborhood $U_C$ of $\vartheta_0$ containing all $\vartheta_{n,h}$ for $n$ large and $h\in C$.

Summing over $i$ gives
\begin{equation}
\label{eq:lan-intermediate}
\Lambda_n(h)
=
h^\top \Delta_n
+
\frac12 h^\top
\Bigl(\frac1n\sum_{i=1}^n \partial_\vartheta^2 m_{\vartheta_0}(X_i)\Bigr)h
+
\sum_{i=1}^n R_{i,n}(h).
\end{equation}
By Proposition~\ref{prop:envelope}, the third derivative has an integrable envelope on $U_C$, so
\[
\sup_{h\in C}\Bigl|\sum_{i=1}^n R_{i,n}(h)\Bigr|
\le
\frac{\sup_{h\in C}\norm{h}^3}{6\sqrt n}\cdot
\frac1n\sum_{i=1}^n
\sup_{\vartheta\in U_C}\bigl\|\partial_\vartheta^3 m_\vartheta(X_i)\bigr\|
\to 0
\qquad\text{a.s.}
\]
Similarly, by Lemma~\ref{lem:ulln},
\[
\frac1n\sum_{i=1}^n \partial_\vartheta^2 m_{\vartheta_0}(X_i)
\to
\E_{\vartheta_0}\bigl[\partial_\vartheta^2 m_{\vartheta_0}(X)\bigr]
=
-\info(\vartheta_0)
\qquad\text{a.s.}
\]
by Lemma~\ref{lem:scoremean}. Substituting these two facts into \eqref{eq:lan-intermediate} yields \eqref{eq:lan}, uniformly on $C$. Finally, $\Delta_n\Rightarrow N(0,\info(\vartheta_0))$ follows from the multivariate CLT, as in the proof of Theorem~\ref{thm:joint-asym-normal}.

\medskip 
\noindent \textbf{Proof of Lemma~\ref{lem:loss-equivalence}}. Let
\[
D(u,v):=\rho\bigl(\Exp_{\mu_0}(u),\Exp_{\mu_0}(v)\bigr)^2 .
\]
The squared distance function is smooth in a neighborhood of $(0,0)$ in
$T_{\mu_0}\Hh^d\times T_{\mu_0}\Hh^d$, because hyperbolic space has no cut locus. Its Taylor expansion at $(0,0)$ is
\[
D(u,v)=\|u-v\|^2+O\bigl((\|u\|+\|v\|)^2\|u-v\|^2\bigr),
\]
uniformly for $(u,v)$ in a sufficiently small fixed ball; this is the standard normal-coordinate expansion of squared Riemannian distance, using
$g_{ij}(w)=\delta_{ij}+O(\|w\|^2)$ near the origin of the chart. If $\sqrt n u_n$ and $\sqrt n v_n$ range over a fixed compact set, then $\|u_n\|+\|v_n\|=O(n^{-1/2})$ and $\|u_n-v_n\|=O(n^{-1/2})$. Therefore
\[
D(u_n,v_n)-\|u_n-v_n\|^2=O(n^{-2})
\]
uniformly on that compact set. Multiplying by $n$ gives the stated convergence.

\medskip 
\noindent \textbf{Proof of Theorem~\ref{thm:lower-bound}}. For fixed $A<\infty$, define the bounded continuous loss
$
w_A(z)=A\wedge \|z\|^2$ for $z \in \bbR^d$. By Theorem~\ref{thm:lan}, the local experiments are LAN with information matrix $\info(\vartheta_0)$. Applying the \HajekLecam local asymptotic minimax theorem to the $d$-dimensional parameter of interest $h_\alpha$, with nuisance $h_\beta$, gives
\begin{equation}
\label{eq:euclidean-hlc-lower}
\lim_{c\to\infty}\liminf_{n\to\infty}\sup_{\|h\|\le c}
\E_{n,h}\bigl[w_A(\tilde a_n-h_\alpha)\bigr]
\ge
\E w_A(Z),
\end{equation}
where $Z\sim N(0,\info_{\alpha\alpha\cdot\beta}^{-1})$; see \citet{hajek_1972_LocalAsymptoticMinimax} and \citet[Ch.~9]{vandervaart_1998_AsymptoticStatistics}.

Now use \eqref{eq:exp-nondecreasing} with
$u=\Log_{\mu_0}(\tilde\mu_n)$ and $v=h_\alpha/\sqrt n$. It gives
\[
n\rho(\tilde\mu_n,\mu_{n,h})^2
\ge
\|\sqrt n\Log_{\mu_0}(\tilde\mu_n)-h_\alpha\|^2
=
\|\tilde a_n-h_\alpha\|^2.
\]
Therefore
\[
A\wedge n\rho(\tilde\mu_n,\mu_{n,h})^2
\ge
w_A(\tilde a_n-h_\alpha).
\]
Combining this inequality with \eqref{eq:euclidean-hlc-lower} proves \eqref{eq:minimax-lower-trunc}. Since the untruncated risk dominates the truncated risk for every $A$, and since
\[
\lim_{A\to\infty}\E[A\wedge\|Z\|^2]=\E\|Z\|^2=\tr(\info_{\alpha\alpha\cdot\beta}^{-1}),
\]
monotone convergence yields \eqref{eq:minimax-lower}.

\medskip
\noindent \textbf{Proof of Theorem~\ref{thm:lam-profile}}. We start by recalling the local regularity fact needed for risk convergence. For every fixed $c<\infty$,
\begin{equation}
\label{eq:uniform-local-linear}
\sqrt n(\hat\vartheta_n-\vartheta_{n,h})
=
\info(\vartheta_0)^{-1}
\frac1{\sqrt n}\sum_{i=1}^n \score_{\vartheta_{n,h}}(X_i)+o_{P_{n,h}}(1),
\end{equation}
uniformly over $\|h\|\le c$. This is obtained by repeating the proof of Theorem~\ref{thm:joint-asym-normal} with the Taylor expansion centered at the local truth $\vartheta_{n,h}$ rather than at $\vartheta_0$. The derivative envelopes in Proposition~\ref{prop:envelope} are valid on one compact neighborhood containing all $\vartheta_{n,h}$ with $\|h\|\le c$ and all large $n$. The compact-uniform law of large numbers in Lemma~\ref{lem:ulln} therefore gives the Hessian convergence uniformly over this local set, and $\info(\vartheta_{n,h})\to\info(\vartheta_0)$ uniformly over $\|h\|\le c$. The Lindeberg condition for the triangular score arrays follows from Proposition~\ref{prop:envelope} together with the uniform moment bound in Lemma~\ref{lem:moments}, applied on the compact local parameter set. Consequently the location component satisfies, uniformly over $\|h\|\le c$,
\begin{equation}
\label{eq:regular-location}
\sqrt n\,\hat\alpha_n-h_\alpha
\Rightarrow
Z\sim N(0,\info_{\alpha\alpha\cdot\beta}^{-1})
\qquad\text{under }P_{n,h}.
\end{equation}
Equivalently, expectations of bounded Lipschitz functions of $\sqrt n\hat\alpha_n-h_\alpha$ converge uniformly on every compact local parameter set, following the usual regularity conclusion for an efficient MLE in a LAN family \citep[Secs.~8.4 and 9.8]{vandervaart_1998_AsymptoticStatistics}.

For bounded continuous losses, \eqref{eq:regular-location} gives convergence of risks. To apply this to the truncated geodesic loss, combine the tightness implied by \eqref{eq:regular-location} with Lemma~\ref{lem:loss-equivalence}. On events where $\sqrt n\hat\alpha_n$ and $h_\alpha$ remain in a fixed compact set,
\[
A\wedge n\rho(\hat\mu_n,\mu_{n,h})^2
-
A\wedge \|\sqrt n\hat\alpha_n-h_\alpha\|^2
\to 0
\]
in probability, uniformly over $\|h\|\le c$. The complement of such compact sets has probability arbitrarily small uniformly over $\|h\|\le c$. Since both truncated losses are bounded by $A$, their expectations have the same limit. Therefore \eqref{eq:lam-attain-trunc} holds.

If Assumption~\ref{ass:ui} holds, the passage from truncated to untruncated squared loss is justified uniformly over $\|h\|\le c$. Letting $A\to\infty$ and using $\E\|Z\|^2=\tr(\info_{\alpha\alpha\cdot\beta}^{-1})$ gives \eqref{eq:lam-attain}. The lower bound in Theorem~\ref{thm:lower-bound} then proves the claimed untruncated local asymptotic minimaxity.

\medskip
\noindent \textbf{Proof of Proposition~\ref{prop:spectral-clipping}}
By orthogonal equivariance of the shell and of the objective, it suffices to treat the case
\[
S=\mathrm{diag}(s_1,\dots,s_d).
\]
Let $B$ be a minimizer. For any skew-symmetric matrix $K$, the path
\[
B_t=e^{tK}B e^{-tK}
\]
remains in $\shell$. Since $B$ minimizes the objective, the derivative at $t=0$ must vanish:
\[
0
=
\frac{d}{dt}\bigg|_{t=0}
\tr\{S B_t^{-1}\}
=
\tr\{[S,B^{-1}]K\}
\qquad
\text{for every skew-symmetric }K .
\]
Because $S$ and $B^{-1}$ are symmetric, the commutator $[S,B^{-1}]$ is skew-symmetric. Taking $
K=[S,B^{-1}]$
gives
$
0
=
\tr\{[S,B^{-1}]^2\}
=
-\|[S,B^{-1}]\|_F^2.
$ Hence $[S,B^{-1}]=0$, and therefore $B$ commutes with $S$. Thus $B$ is block diagonal with respect to the eigenspaces of $S$. Within each eigenspace corresponding to a repeated eigenvalue $s$, the objective contribution is
$
\log\det B_s+s\,\tr(B_s^{-1}),
$
where $B_s$ is the corresponding block. Diagonalizing $B_s$ reduces this block contribution to a sum of scalar terms
\[
\sum_j \left(\log\sigma_j+\frac{s}{\sigma_j}\right),
\qquad
\lambda_-\le \sigma_j\le \lambda_+.
\]
Hence the entire problem reduces to the scalar minimizations
\[
\min_{\lambda_-\le \sigma\le \lambda_+}
\left(\log\sigma+\frac{s_j}{\sigma}\right),
\qquad j=1,\dots,d.
\]
For each $j$,
\[
\frac{d}{d\sigma}
\left(\log\sigma+\frac{s_j}{\sigma}\right)
=
\frac{\sigma-s_j}{\sigma^2}.
\]
The scalar objective is decreasing on $(0,s_j)$ and increasing on $(s_j,\infty)$, so its unique minimizer over $[\lambda_-,\lambda_+]$ is
\[
\widetilde s_j
=
\min\{\lambda_+,\max\{\lambda_-,s_j\}\}.
\]
This proves \eqref{eq:spectral-clipping}. The uniqueness follows from the uniqueness of each scalar minimizer, including within repeated-eigenvalue subspaces.


\end{document}